\newcommand{\eps}{\epsilon}
\newcommand{\R}{\mathbb{R}}
\newcommand{\C}{\mathbb{C}}
\newcommand{\Q}{\mathbb{Q}}
\newcommand{\Z}{\mathbb{Z}}
\newcommand{\N}{\mathbb{N}}
\newcommand{\AK}{{\mathcal A}_K}
\newcommand{\AF}{{\mathcal A}_F}
\theoremstyle{plain}
  \newtheorem{theorem}[subsection]{Theorem}
  \newtheorem{proposition}[subsection]{Proposition}
  \newtheorem{lemma}[subsection]{Lemma}
  \newtheorem{corollary}[subsection]{Corollary}
  \newtheorem{question}[subsection]{Question}
\theoremstyle{remark}
  \newtheorem{remark}[subsection]{Remark}
\theoremstyle{definition}
  \newtheorem{definition}[subsection]{Definition}
\begin{document}
\title[$\{x+y,xy\}$ patterns]{Ergodic Theorem involving additive and multiplicative groups of a field and $\{x+y,xy\}$ patterns}
\author{Vitaly Bergelson}
\address[Bergelson]{Department of Mathematics, The Ohio State University, Columbus, OH 43210, USA}
\email{vitaly@math.ohio-state.edu}
\author{Joel Moreira}
\address[Moreira]{Department of Mathematics, The Ohio State University, Columbus, OH 43210, USA}
\email{moreira@math.ohio-state.edu}
\subjclass{Primary 37A45 \and 05D10\and Secondary 05A18\and 11T99\and 28D15}

\vspace{-0.3in}
\begin{abstract}
We establish a ``diagonal'' ergodic theorem involving the additive and multiplicative groups of a countable field $K$ and, with the help of a new variant of Furstenberg's correspondence principle, prove that any ``large'' set in $K$ contains many configurations of the form $\{x+y,xy\}$. We also show that for any finite coloring of $K$ there are many $x,y\in K$ such that $x,x+y$ and $xy$ have the same color.
Finally, by utilizing a finitistic version of our main ergodic theorem, we obtain combinatorial results pertaining to finite fields. In particular we obtain an alternative proof for a result obtained by Cilleruelo \cite{Cilleruelo12}, showing that for any finite field $F$ and any subsets $E_1,E_2\subset F$ with $|E_1||E_2|>6|F|$, there exist $u,v\in F$ such that $u+v\in E_1$ and $uv\in E_2$.

\end{abstract}
\thanks{The first author was supported by NSF grant DMS-1162073.}
\maketitle

\section{Introduction}
Schur's Theorem \cite{Schur16} asserts that, given a finite coloring of $\N=\{1,2,...\}$, there exist $x,y\in\N$ such that $x$, $y$ and $x+y$ all have the same color.
A multiplicative version of this theorem is also true: given a finite coloring of $\N$, we can find $x,y\in\N$ such that $x$, $y$ and $xy$ all have the same color. To see this, consider, for instance, the induced coloring of the set $\{2^n;n\in\N\}$.
However, very little is known regarding partition regularity of configurations involving both addition and multiplication.
For some results in this direction see \cite{Beiglbock_Bergelson_Hindman_Strauss06},  \cite[Section 6]{Bergelson10}, \cite{Hindman11}, \cite{Frantzikinakis_Host13}, \cite{Bergelson_McCutcheon96}, \cite{Bergelson05} and \cite{Bergelson_Furstenberg_McCutcheon96}.

In particular, the following question is still unanswered (cf. Question 3 in \cite{Hindman_Imre_Strauss03}).
\begin{question}\label{problem}
Given a finite coloring of $\N$, is it true that there exist distinct $x,y\in\N$ such that both $x+y$ and $xy$ have the same color? \footnote{In fact it is believed that one can find a monochromatic configuration of the form $\{x,y,x+y,xy\}$ for any finite coloring of $\N$.}
\end{question}

While replacing $\N$ by $\Z$ does not seem to make the question easier, replacing $\N$ by $\Q$ does and allows for the introduction of useful ergodic techniques.
In this paper we show that any `large'\footnote{Here large means to have positive upper density with respect to some \emph{double} F\o lner sequence in $\Q$. This will be defined in Section \ref{section_doublefolner}.} set in $\Q$ contains the sought-after configurations, which leads to a partition result involving three-element sets having the form $\{x,y+x,yx\}$.
Actually, the ergodic method that we employ works equally well in the framework of arbitrary countable\footnote{We use the word `countable' to mean infinitely countable} fields.\footnote{And indeed in the framework of finite fields, this is explored in Section \ref{section_finite}.}

Here is the formulation of the main partition result obtained in this paper.
\begin{theorem}[see Theorem \ref{thm_fullcolor} for a more precise formulation]\label{teo_main}
\

Let $K$ be a countable field. Given a finite coloring $K=\bigcup C_i$, there exists a color $C_i$ and $x,y\in K$ such that $\{x,x+y,xy\}\subset C_i$.
\end{theorem}
We remark that it follows from Theorem \ref{thm_fullcolor} below that $x$ and $y$ can in fact be choosen from outside any prescribed finite set.
We will derive Theorem \ref{teo_main} from a `density' statement which, in turn, follows from an ergodic result dealing with measure preserving actions of the affine group $\AK=\{ux+v:u,v\in K^*,u\neq0\}$ of the field $K$ (cf. Definition \ref{def_subgroups}). To formulate these results we need to introduce first the notion of double F\o lner sequences in $K$.

\begin{definition}\label{def_doublefolner}
Let $K$ be a countable field. A sequence of non-empty finite sets $(F_N)\subset K$ is called a \emph{double F\o lner sequence} if for each $x\in K^*:=K\setminus\{0\}$ we have
$$\lim_{N\to\infty}\frac{|F_N\cap(F_N+x)|}{|F_N|}=\lim_{N\to\infty}\frac{|F_N\cap(xF_N)|}{|F_N|}=1$$
\end{definition}
See Proposition \ref{prop_folner} for the proof of the existence of double F\o lner sequences in any countable field.

Here is the formulation of the main ergodic result in this paper and its combinatorial corollary (which is derived via a version of the Furstenberg correspondence principle - see Theorem \ref{prop_correspondence}).

\begin{theorem}\label{thm_introduction}
Let $K$ be a countable field and let $\AK$ be the group of affine transformations of $K$.
Let $(\Omega,{\mathcal B},\mu)$ be a probability space and let $(T_g)_{g\in\AK}$ be a measure preserving action of $\AK$ on $\Omega$.
For each $u\in K^*$, let $A_u=T_g$ where $g\in\AK$ is defined by $g:x\mapsto x+u$ and let $M_u=T_h$ where $h\in\AK$ is defined by $h:x\mapsto ux$.
Let $(F_N)$ be a double F\o lner sequence in $K$.
Then for each $B\in{\mathcal B}$ we have
$$\lim_{N\to\infty}\frac1{|F_N|}\sum_{u\in F_N}\mu(A_{-u}B\cap M_{1/u}B)\geq\mu(B)^2$$
and in particular the limit exists.
\end{theorem}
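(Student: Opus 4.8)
The plan is to pass to the Koopman representation and reduce the statement to an assertion about unitary operators. Put $f=1_B\in L^2(\mu)$ and, for $g\in\AK$, let $U_g\phi=\phi\circ T_g^{-1}$, so that $g\mapsto U_g$ is a unitary representation of $\AK$ on $L^2(\mu)$ and $\mu(T_gB\cap T_hB)=\langle U_gf,U_hf\rangle$. I will abuse notation and write $A_b$, $M_a$ also for the corresponding unitaries. The defining relations of $\AK$ then give $M_aA_b=A_{ab}M_a$ for all $a\in K^*$, $b\in K$; in particular $A_b^*=A_{-b}$, $M_a^*=M_{1/a}$, and each $M_a$ normalizes the family $\{A_b:b\in K\}$. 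The goal becomes showing $\frac1{|F_N|}\sum_{u\in F_N}\langle A_{-u}f,M_{1/u}f\rangle\to L$ for some $L\ge\mu(B)^2$.

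Next I would split $f$ according to the additive invariant $\sigma$-algebra. Let $\mathcal I_A=\{\phi\in L^2(\mu):A_b\phi=\phi\text{ for all }b\in K\}$, let $P_A$ be the orthogonal projection onto $\mathcal I_A$, and set $f_1=P_Af$, $f_2=f-f_1$, so $P_Af_2=0$. Since each $M_a$ conjugates the additive action to itself, $M_a(\mathcal I_A)=\mathcal I_A$, hence $M_a$ commutes with $P_A$ and maps $\mathcal I_A^{\perp}$ to $\mathcal I_A^{\perp}$; the same holds trivially for every $A_b$. Expanding $\langle A_{-u}f,M_{1/u}f\rangle$ with $f=f_1+f_2$ into four terms, the two cross terms vanish identically: $\langle A_{-u}f_1,M_{1/u}f_2\rangle=\langle M_uf_1,f_2\rangle=0$ because $M_uf_1\in\mathcal I_A$ and $f_2\perp\mathcal I_A$, and $\langle A_{-u}f_2,M_{1/u}f_1\rangle=0$ because $A_{-u}f_2\in\mathcal I_A^{\perp}$ while $M_{1/u}f_1\in\mathcal I_A$.

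For the $f_1$-term, $A_{-u}f_1=f_1$ gives $\frac1{|F_N|}\sum_{u\in F_N}\langle A_{-u}f_1,M_{1/u}f_1\rangle=\bigl\langle f_1,\frac1{|F_N|}\sum_{u\in F_N}M_{1/u}f_1\bigr\rangle$. Because $(F_N)$ is a double F\o lner sequence, $\{1/u:u\in F_N\}$ (discarding $0$, if present, which is negligible) is a F\o lner sequence in the abelian group $(K^*,\cdot)$, so by the mean ergodic theorem $\frac1{|F_N|}\sum_{u\in F_N}M_{1/u}f_1\to P_Mf_1$ in $L^2$, where $P_M$ is the projection onto $M$-invariant vectors. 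Hence this term tends to $\langle f_1,P_Mf_1\rangle=\|P_Mf_1\|^2\ge|\langle P_Mf_1,1\rangle|^2=|\langle f_1,1\rangle|^2=\mu(B)^2$, using Cauchy--Schwarz, $P_M1=1=P_A1$, and $\langle f,1\rangle=\mu(B)$.

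It remains to prove that the $f_2$-term $\frac1{|F_N|}\sum_{u\in F_N}\langle A_{-u}f_2,M_{1/u}f_2\rangle$ tends to $0$, which is the heart of the matter. Rewriting $\langle A_{-u}f_2,M_{1/u}f_2\rangle=\langle M_uA_{-u}f_2,f_2\rangle=\langle A_{-u^2}M_uf_2,f_2\rangle=\langle y_u,f_2\rangle$ with $y_u:=A_{-u^2}M_uf_2$, I would invoke the van der Corput lemma for F\o lner averages in the amenable group $(K^*,\cdot)$ along $(F_N)$; this reduces the claim to showing that $\lim_N\frac1{|F_N|}\sum_{u\in F_N}\langle y_{hu},y_u\rangle=0$ for all but finitely many $h\in K^*$. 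A direct computation using only $M_aA_b=A_{ab}M_a$ and $A_b^*=A_{-b}$, $M_a^*=M_{1/a}$ gives $\langle y_{hu},y_u\rangle=\langle A_{u(1-h^2)}M_hf_2,f_2\rangle$. For fixed $h$ with $h^2\ne1$, the dilate $(1-h^2)F_N$ is again an additive F\o lner sequence, so after reindexing $w=u(1-h^2)$ and applying the mean ergodic theorem for the additive action one gets $\lim_N\frac1{|F_N|}\sum_{u\in F_N}\langle y_{hu},y_u\rangle=\langle P_A(M_hf_2),f_2\rangle=\langle M_hf_2,P_Af_2\rangle=0$, since $P_Af_2=0$. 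Thus the correlations vanish for every $h\notin\{1,-1\}$, van der Corput applies, $\frac1{|F_N|}\sum_{u\in F_N}y_u\to0$ weakly, and the $f_2$-term tends to $0$. Adding the four contributions, the full limit exists and equals $\|P_Mf_1\|^2\ge\mu(B)^2$. The main obstacle I anticipate is essentially bookkeeping: verifying carefully that a double F\o lner sequence serves simultaneously as an additive and a multiplicative F\o lner sequence, that its dilates $cF_N$ inherit the additive F\o lner property, and that the commutations $M_aA_b=A_{ab}M_a$ and $M_aP_A=P_AM_a$ are used with the correct conventions; together with locating the precise form of the van der Corput lemma needed here.
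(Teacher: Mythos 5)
Your proof is correct and follows essentially the same route as the paper: the decomposition $f=P_Af+(f-P_Af)$, the van der Corput trick with the identical correlation computation $\langle y_{hu},y_u\rangle=\langle A_{u(1-h^2)}M_hf_2,f_2\rangle$ killed by the mean ergodic theorem along the dilated F\o lner sequence, the mean ergodic theorem for $(K^*,\times)$ on the invariant part, and Cauchy--Schwarz at the end. The only (harmless) difference is organizational: the paper first proves the norm convergence $\frac1{|F_N|}\sum_{u\in F_N}M_uA_{-u}f\to Pf$ with $P=P_MP_A=P_AP_M$ as a standalone lemma and then pairs with $1_B$, whereas you expand the bilinear form into four terms and use $M_aP_A=P_AM_a$ only to kill the cross terms, arriving at the same lower bound $\|P_MP_Af\|^2\geq\mu(B)^2$.
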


It is not hard to see that the quantity $\mu(B)^2$ in the right hand side of the displayed formula is the largest possible (consider for example the case when the action of $\AK$ is strongly mixing).

\begin{theorem}
Let $K$ be a countable field, let $(F_N)$ be a double F\o lner sequence in $K$ and let $E\subset K$ be such that $\bar d_{(F_N)}(E):=\limsup_{N\to\infty}|E\cap F_N|/|F_N|>0$.
Then there are infinitely many pairs $x,y\in K^*$ with $x\neq y$ such that
\begin{equation}\label{eq_main}
\{x+y,xy\}\subset E
\end{equation}
\end{theorem}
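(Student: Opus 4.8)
The plan is to derive this combinatorial statement from the ergodic result (Theorem \ref{thm_introduction}) by means of the Furstenberg-type correspondence principle (Theorem \ref{prop_correspondence}). Write $\delta:=\bar d_{(F_N)}(E)>0$. First I would apply Theorem \ref{prop_correspondence} to the set $E$: this produces a probability space $(\Omega,\mathcal B,\mu)$, a measure preserving action $(T_g)_{g\in\AK}$ of the affine group on $\Omega$, and a set $B\in\mathcal B$ with $\mu(B)=\delta$, such that, with $A_{-u}$ and $M_{1/u}$ defined from the affine maps $x\mapsto x-u$ and $x\mapsto x/u$ exactly as in Theorem \ref{thm_introduction}, one has for every $u\in K^*$
\[
\bar d_{(F_N)}\big((E-u)\cap u^{-1}E\big)\ \geq\ \mu\big(A_{-u}B\cap M_{1/u}B\big).
\]
(The reason Theorem \ref{thm_introduction} is phrased in terms of $A_{-u}$ and $M_{1/u}$ is precisely to make the sets on the left here literally $E-u$ and $u^{-1}E$.)

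Next I would feed this system, the set $B$, and the given double F\o lner sequence $(F_N)$ into Theorem \ref{thm_introduction}, which yields
\[
\lim_{N\to\infty}\frac1{|F_N|}\sum_{u\in F_N}\mu\big(A_{-u}B\cap M_{1/u}B\big)\ \geq\ \mu(B)^2=\delta^2>0.
\]
Since this limit is strictly positive, the averages on the left are eventually positive, so there is at least one $u\in K^*$ with $\mu\big(A_{-u}B\cap M_{1/u}B\big)>0$. Fixing such a $u$ and invoking the inequality from the correspondence principle, the set $S:=(E-u)\cap u^{-1}E$ has positive upper density along $(F_N)$; since $(F_N)$ is a double F\o lner sequence in an infinite field we have $|F_N|\to\infty$, and hence any set of positive upper density along $(F_N)$ is infinite, so $S$ is infinite. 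Finally I would set $y:=u\in K^*$: for every $x\in S$ we have $x+y=x+u\in E$ (because $x\in E-u$) and $xy=xu\in E$ (because $x\in u^{-1}E$); discarding the at most two values $x=0$ and $x=u$, each remaining $x\in S$ together with $y$ is a pair of distinct elements of $K^*$ satisfying \eqref{eq_main}, and distinct $x$ give distinct pairs, so there are infinitely many such pairs.

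I do not expect a genuine obstacle in this deduction: all the analytic content sits in Theorem \ref{thm_introduction}, which we may assume, and what remains is soft. The two points that need a little care are (i) invoking the correspondence principle in exactly the form that matches the transformations $A_{-u},M_{1/u}$ of Theorem \ref{thm_introduction} with the translate $E-u$ and dilate $u^{-1}E$ of $E$ --- essentially bookkeeping about which side the $\AK$-action acts on; and (ii) the observation that a single ``good'' $u$ already suffices, since for that one $u$ the set of admissible $x$ is infinite, so one never needs a positive-density set of $u$'s (though Theorem \ref{thm_introduction} in fact provides that too, and the argument then immediately upgrades to: for a positive-density set of $y\in K$ there are infinitely many $x$ with $\{x+y,xy\}\subset E$).
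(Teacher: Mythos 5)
Your proof is correct and follows essentially the same route as the paper: the correspondence principle (Theorem \ref{prop_correspondence}) applied to $E$ with the affine maps $A_{-u},M_{1/u}$, combined with the ergodic Theorem \ref{thm_introduction} (= Theorem \ref{teo_doublergodic}). The paper packages this deduction as Theorem \ref{teo_double}, using Corollary \ref{cor_estimates} to record in addition that the set of good $u$'s has positive lower density; your observation that a single good $u$ already yields an infinite set of admissible $x$'s is a legitimate shortcut for the purely qualitative ``infinitely many pairs'' statement.
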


A precise formulation of how large is the set of pairs $(x,y)$ that satisfy equation (\ref{eq_main}) is given by Theorem \ref{teo_double} below.

We also obtain similar results for finite fields. For example, we have the following result (see also  Theorem \ref{teo_finitergodic} for a dynamical formulation)

\begin{theorem}\label{thm_introfinite}
For any finite field $F$ and any subsets $E_1,E_2\subset F$ with $\left|E_1\right|\left|E_2\right|>6|F|$, there exist $x,y\in F$, $y\neq0$, such that $x+y\in E_1$ and $xy\in E_2$.
\end{theorem}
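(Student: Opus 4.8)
The plan is to count solutions directly and verify the count is positive by means of a classical character-sum estimate. Write $q=|F|$ and let $N$ denote the number of ordered pairs $(u,y)\in F\times(F\setminus\{0\})$ with $u+y\in E_1$ and $uy\in E_2$; it suffices to prove $N>0$, and in fact it is enough to find such a pair with the extra property that $uy\neq 0$. Parametrizing by the elementary symmetric functions $s=u+y$ and $p=uy$: for any $s\in F$ and any $p\in F\setminus\{0\}$ the number of ordered pairs $(u,y)$ with $u+y=s$ and $uy=p$ equals the number of roots of $T^2-sT+p$ in $F$, which in odd characteristic is $1+\eta(s^2-4p)$, where $\eta$ denotes the quadratic character of $F$ with the convention $\eta(0)=0$. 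Restricting $s$ to $E_1$ and $p$ to $E_2\setminus\{0\}$ then yields
\[
N \ \geq\ |E_1|\bigl(|E_2|-1\bigr) + D, \qquad D \ :=\ \sum_{s\in E_1}\ \sum_{p\in E_2\setminus\{0\}}\eta\bigl(s^2-4p\bigr),
\]
so everything comes down to showing $|D|<|E_1|\bigl(|E_2|-1\bigr)$.

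The key step is to estimate $|D|$ using Cauchy--Schwarz together with an explicit quadratic character sum. Applying Cauchy--Schwarz in the variable $p$ and then enlarging the range of $p$ to all of $F$,
\[
|D|^2 \ \leq\ |E_2|\sum_{p\in F}\Bigl|\sum_{s\in E_1}\eta(s^2-4p)\Bigr|^2 \ =\ |E_2|\sum_{s,s'\in E_1}\ \sum_{p\in F}\eta\bigl((s^2-4p)((s')^2-4p)\bigr).
\]
The inner sum over $p$ is evaluated by completing the square --- equivalently, by counting the affine points of the conic $z^2=(s^2-4p)((s')^2-4p)$ --- and it equals $q-1$ when $s'=\pm s$ and equals $-1$ for every other pair $(s,s')$. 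Since at most $2|E_1|$ pairs in $E_1\times E_1$ satisfy $s'=\pm s$, the double sum is bounded by $2|E_1|q-|E_1|^2<2q|E_1|$, and hence $|D|<\sqrt{2q\,|E_1|\,|E_2|}$.

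Combining the two estimates gives $N\geq|E_1|\bigl(|E_2|-1\bigr)-\sqrt{2q\,|E_1|\,|E_2|}$. The hypothesis $|E_1||E_2|>6q$ forces $|E_1|>6$ and $|E_2|>6$ (each being at most $q$), and a short computation then shows this lower bound for $N$ is strictly positive; thus a solution exists. In characteristic $2$ the same scheme works after replacing the quadratic character by the absolute trace $\mathrm{Tr}\colon F\to\F_2$: for $s\neq 0$ the polynomial $T^2+sT+p$ has two roots or none according as $\mathrm{Tr}(p/s^2)$ is $0$ or not, and the analogous Cauchy--Schwarz argument --- with an Artin--Schreier point count replacing the conic count --- produces an estimate of the same strength, comfortably inside the constant $6$. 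No effort is made to optimize that constant.

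The main obstacle, and essentially the only place requiring care, is the exact evaluation of $\sum_{p\in F}\eta\bigl((s^2-4p)((s')^2-4p)\bigr)$ together with the correct isolation of the ``diagonal'' pairs $s'=\pm s$, which alone account for the main term $q-1$ apiece; mishandling this bookkeeping breaks the bound. The characteristic-$2$ case, although parallel in structure, needs its own (Artin--Schreier) computation. Everything else --- the reduction to a count, the symmetric-function parametrization, and the concluding elementary inequality --- is routine.
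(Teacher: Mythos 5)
Your argument is correct, but it is genuinely different from the paper's. The paper derives this statement from an operator-theoretic result (Theorem \ref{teo_finitergodic}) valid for \emph{any} measure-preserving action of the affine group $\AF$: it works in the Koopman representation, decomposes $1_C$ as $P_A1_C$ plus a part killed by $P_A$, and bounds $\bigl\|\sum_{u\in F^*}M_uA_{-u}f\bigr\|^2$ by expanding into $\sum_{u,d}\langle A_{u(d^2-1)}f,M_df\rangle$ and isolating the diagonal $d=\pm1$; the combinatorial statement then follows by specializing to the ergodic action on $\Omega=F$ with counting measure. You instead bypass the dynamics entirely: the parametrization by $s=u+y$, $p=uy$ turns the count into $\sum_{s\in E_1}\sum_{p\in E_2\setminus\{0\}}\bigl(1+\eta(s^2-4p)\bigr)$, and the error term is controlled by Cauchy--Schwarz plus the exact evaluation of $\sum_{t}\eta\bigl((t+a)(t+b)\bigr)$, which is $q-1$ on the diagonal $a=b$ and $-1$ off it --- your bookkeeping of the pairs $s'=\pm s$ and the resulting bound $|D|<\sqrt{2q|E_1||E_2|}$ are right, as is the characteristic-$2$ variant via the trace (where the relevant exponential sum is even cleaner, vanishing off the diagonal $s=s'$). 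There is a clear structural kinship --- both proofs are second-moment arguments whose main term comes from a two-element ``diagonal'' ($d=\pm1$ there, $s'=\pm s$ here) --- but the trade-offs differ: the paper's route proves the stronger Theorem \ref{teo_finitergodic} for arbitrary actions of $\AF$ (which is what feeds the recurrence machinery elsewhere in the paper), while your computation only treats the natural action on $F$ itself. In exchange it is entirely elementary and actually sharper: your inequality $N\geq|E_1|(|E_2|-1)-\sqrt{2q|E_1||E_2|}$ shows the constant $6$ can be replaced by anything exceeding $2$ in odd characteristic (and by $1+\eps$ in characteristic $2$), whereas the paper's $\|f\|^2\leq 2\mu(C)$ and three-term diagonal estimate lead to the factor $6$. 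Your approach is essentially the Fourier/character-sum method the paper attributes to Shkredov for prime fields, carried out over a general finite field; the only points genuinely requiring care --- the conic point count and the separate Artin--Schreier computation in characteristic $2$ --- are exactly the ones you flag, and you handle them correctly.
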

Theorem \ref{thm_introfinite} has also been obtained for fields of prime order by Shkredov \cite{Shkredov10}, for general finite fields by Cilleruelo \cite[Corollary 4.2]{Cilleruelo12} and, with some additional quantitative estimates, by Hanson \cite{Hanson13}.

The paper is organized as follows.
In Section \ref{section_preliminaries} we introduce some notation, discuss basic facts about the affine group of a countable field, explore some properties of double F\o lner sequences, state our main results more precisely and obtain a general correspondence principle.
In Section \ref{section_proof} we prove our main ergodic theoretical results.
In Section \ref{section} we deduce the main partition result for infinite fields, Theorem \ref{teo_main}.
In Section \ref{section_finite} we adapt our methods to prove an analogue of Theorems \ref{teo_main} and \ref{thm_introduction} for finite fields.
Section \ref{section_remarks} is devoted to some general remarks.

\section{Preliminaries}\label{section_preliminaries}
\subsection{The group $\AK$ of affine transformations}
We will work with a fixed countable field $K$.
The set of non-zero elements of $K$ will be denoted by $K^*$.
It is not hard to see that for a set $E\subset K$, the statement that $E$ contains a configuration of the form $\{u+v,uv\}$ is equivalent to the statement that $(E-u)\cap(E/u)$ is non-empty for some $u\in K^*$.
To study this intersection we need to understand how the additive and the multiplicative groups of $K$ interact.
Hence it is natural to work with the subgroup of all bijections of $K$ generated by these two groups, which brings us to the group
of all affine transformations of $K$:

\begin{definition}\label{def_subgroups}
The \emph{affine group} of $K$ is the set
$$\AK=\{g:x\mapsto ux+v\mid u,v\in K, u\neq0\}$$
with the operation of composition of functions.
The \emph{additive subgroup} of $\AK$ is the set $S_A$ of affine transformations of the form $A_u:x\mapsto x+u$, with $u\in K$. Note that $S_A$ is isomorphic to the additive group $(K,+)$.
The \emph{multiplicative subgroup} of $\AK$ is the set $S_M$ of affine transformations of the form $M_u:x\mapsto ux$, with $u\in K^*$. Note that $S_M$ is isomorphic to the multiplicative group $(K^*,\times)$.
\end{definition}

 Note that the map $x\mapsto ux+v$ can be represented as the composition $A_vM_u$.
We have the following identity, which will be frequently utilized in this paper:
\begin{equation}\label{eq_affine}
M_uA_v=A_{uv}M_u
\end{equation}

Note that equation (\ref{eq_affine}) expresses the fact that $S_A$ is a normal subgroup of $\AK$.
Since both $S_A$ and $S_M\cong \AK/S_A$ are abelian groups, we conclude that $\AK$ is a solvable group.

We can now represent the intersection $(E-u)\cap(E/u)$ as $A_{-u}E\cap M_{1/u}E$,
where $A_{-u}E=\{x-u:x\in E\}$ and $M_{1/u}E=\{x/u:x\in E\}$.
With this notation, a set $E\subset K$ contains a configuration of the form $\{u+v,uv\}$ if and only if there is some $u\in K^*$ such that $A_{-u}E\cap M_{1/u}E$ is non-empty.
Also, the statement that there exist $u,v\in K^*$ such that $\{v,u+v,uv\}\subset E$ is equivalent to the statement that there exists some $u\in K^*$ such that $E\cap A_{-u}E\cap M_{1/u}E\neq\emptyset$.

\subsection{Double F\o lner sequences}\label{section_doublefolner}
The first step in the proof of Theorem \ref{teo_main} is to prove that the intersection $A_{-u}E\cap M_{1/u}E$ is non empty (for many choices of $u\in K$) when $E$ is a large subset of $K$ in a suitable sense.
In this section we will make this statement more precise.

As mentioned before, $\AK$ is solvable and hence it is a (discrete) countable amenable group.
This suggests the existence of a sequence of finite sets $(F_N)$ in $K$ asymptotically invariant under the action of $\AK$. Indeed we have the following:

\begin{proposition}\label{prop_folner}
Let $K$ be a countable field.
There exists a sequence of non-empty finite sets $(F_N)$ in $K$ which forms a F\o lner sequence for the actions of both the additive group $(K,+)$ and the multiplicative group $(K^*,\times)$. In other words, for each $u\in K^*$ we have:
$$\lim_{N\to\infty}\frac{|F_N\cap(F_N+u)|}{|F_N|}=\lim_{N\to\infty}\frac{|F_N\cap(uF_N)|}{|F_N|}=1$$
We call such a sequence $(F_N)$ a double F\o lner sequence.
\end{proposition}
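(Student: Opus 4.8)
The plan is to construct a double Følner sequence by exhausting $K$ through a nested sequence of finitely generated subrings and using the fact that each such subring embeds in a field that carries a natural ``box'' structure with good invariance properties. Concretely, write $K$ as an increasing union $K = \bigcup_n K_n$ where each $K_n$ is a finitely generated subring of $K$. Inside each $K_n$ one can find finite sets that are approximately invariant under translation by any fixed element and under multiplication by any fixed nonzero element; the point is to do this simultaneously for the finitely many ``test elements'' relevant at stage $n$, and then diagonalize.

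First I would reduce to a cleaner model for $K$. A finitely generated field (or the fraction field of a finitely generated domain) has transcendence degree $d$ over its prime field and is a finite extension of a purely transcendental extension $k_0(t_1,\dots,t_d)$. For the purely transcendental piece, a natural Følner-type family for the additive group is given by polynomials of bounded degree and bounded ``height'' in the $t_i$, while multiplication by a fixed nonzero rational function only shifts degrees and heights by a bounded amount, so such boxes are asymptotically invariant under both operations. Passing to a finite extension multiplies sizes by a bounded factor and preserves the Følner property. The prime field is either $\Q$ (where one uses intervals rescaled by a common denominator, which is the classical double Følner construction in $\Q$) or $\F_p$ (a finite field, handled trivially or, as the paper notes, in Section \ref{section_finite}).

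The main step is then the diagonalization: enumerate $K^* = \{x_1, x_2, \dots\}$, and for each $N$ choose a single finite set $F_N$ (living inside $K_N$, say) such that
$$\frac{|F_N \cap (F_N + x_j)|}{|F_N|} \geq 1 - \frac1N \quad\text{and}\quad \frac{|F_N \cap (x_j F_N)|}{|F_N|} \geq 1 - \frac1N$$
for all $j \le N$. This is possible because within a fixed finitely generated subfield the additive and multiplicative groups are both amenable (indeed the additive group is $\Z^m$-like up to bounded factors, and the multiplicative group of a finitely generated field is finitely generated by a theorem going back to the theory of $S$-units), so one can find a common Følner set for any finite collection of required invariances. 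Letting $N \to \infty$ gives the two limits in the statement.

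The step I expect to be the main obstacle is producing, inside a single finitely generated subfield $K_n$, a finite set that is simultaneously additively and multiplicatively almost-invariant for a prescribed finite list of elements — i.e., checking that the additive ``box'' I write down really is approximately multiplicatively invariant. In the rational function field model this is a bounded-degree/bounded-height estimate (multiplying a degree-$\le R$, height-$\le H$ polynomial by a fixed rational function of degree $\le c$ and height $\le c$ changes the box by an amount that is negligible relative to the box size as $R, H \to \infty$), but making this uniform across the finitely many test elements and then patching across the tower $K_1 \subset K_2 \subset \cdots$ requires some care. An alternative, perhaps cleaner, route that avoids the structure theory of finitely generated fields: observe that $\AK$ is a countable amenable group (shown above to be solvable), take any Følner sequence $(G_N)$ for $\AK$ itself, fix a base point $x_0 \in K$, and set $F_N = \{g(x_0) : g \in G_N\}$ (or a suitable sub-multiset); since translations $A_x$ and dilations $M_x$ lie in $\AK$, asymptotic invariance of $G_N$ under left multiplication by $A_x$ and $M_x$ should push down to the asymptotic invariance of $F_N$ under $F_N \mapsto F_N + x$ and $F_N \mapsto xF_N$, modulo controlling the fibers of the orbit map $g \mapsto g(x_0)$. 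I would develop whichever of these two approaches keeps the bookkeeping lightest.
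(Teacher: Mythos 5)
Your second, ``alternative'' route is the one the paper actually takes, but the step you defer --- ``controlling the fibers of the orbit map $g\mapsto g(x_0)$'' --- is precisely the content of the proof, and with a \emph{fixed} base point it genuinely fails: the stabilizer of $x_0$ in $\AK$ is the infinite subgroup $\{x\mapsto ux+x_0(1-u):u\in K^*\}\cong K^*$, so the fibers of $g\mapsto g(x_0)$ are infinite cosets, the set $\{g(x_0):g\in G_N\}$ may be far smaller than $G_N$, and the F\o lner property of $(G_N)$ does not push down to it. The fix is to let the base point depend on $N$: two distinct affine maps agree at no more than one point of $K$, so for each finite $G_N$ the infinite field $K$ contains a point $x_N$ at which all elements of $G_N$ take pairwise distinct values. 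Then $F_N:=\{gx_N:g\in G_N\}$ has exactly $|G_N|$ elements, the inclusion $F_N\cap gF_N\supset\{hx_N:h\in G_N\cap gG_N\}$ gives $|F_N\cap gF_N|\geq|G_N\cap gG_N|$, and specializing $g=A_u$ and $g=M_u$ yields both limits. This is a two-line argument once the per-$N$ choice of base point is made; without it your reduction is incomplete.

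Your primary approach (towers of finitely generated subfields, boxes of bounded degree and height, diagonalization) is a much heavier detour whose central difficulty --- producing a single finite set that is simultaneously additively and multiplicatively almost invariant --- is exactly what is left unproved: amenability of the additive and multiplicative groups \emph{separately} does not by itself produce a common F\o lner set for the two different group structures, which is the whole point of the proposition. Moreover, the assertion that the multiplicative group of a finitely generated field is finitely generated is false ($\Q^*$ is free abelian of countably infinite rank); the $S$-unit theorem gives finite generation of unit groups of finitely generated \emph{rings}, not of fields. Neither issue arises once you adopt the orbit construction above, which works uniformly for every countable field.
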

\begin{proof}
Let $(G_N)_{N\in\N}$ be a (left) F\o lner sequence in $\AK$.
This means that $G_N$ is a non-empty finite subset of $\AK$ for each $N\in\N$, and that for each $g\in\AK$ we have
$$\lim_{N\to\infty}\frac{\left|G_N\cap(gG_N)\right|}{|G_N|}=1$$

Note that for $g_1,g_2\in\AK$, if $g_1\neq g_2$ then there is at most one solution $x\in K$ to the equation $g_1x=g_2x$.
Thus, for each $N\in\N$, we can find a point $x_N$ in the (infinite) field $K$ such that $g_ix_N\neq g_jx_N$ for all pairs $g_i,g_j\in G_N$ with $g_i\neq g_j$. It follows that $F_N:=\{gx_N:g\in G_N\}$ has $|G_N|$ elements.

Since $F_N\cap gF_N\supset\{hx_N:h\in G_N\cap gG_N\}$ we have $|F_N\cap gF_N|\geq|\{hx_N:h\in G_N\cap gG_N\}|=|G_N\cap gG_N|$.
Therefore:
$$1\geq\limsup_{N\to\infty}\frac{|F_N\cap gF_N|}{|F_N|}\geq\liminf_{N\to\infty}\frac{|F_N\cap gF_N|}{|F_N|}
\geq\lim_{N\to\infty}\frac{|G_N\cap gG_N|}{|G_N|}=1$$

Finally, putting $g=M_u$ and $g=A_u$ in the previous equation we get that $(F_N)$ is a F\o lner sequence for $(K^*,\times)$ and for $(K,+)$.
\end{proof}

From now on we fix a double F\o lner sequence $(F_N)$ in $K$.
For a set $E\subset K$, the lower density of $E$ with respect to $(F_N)$ is defined by the formula:
$$\underline d_{(F_N)}(E):=\liminf_{N\to\infty}\frac{|F_N\cap E|}{|F_N|}$$
and the upper density of $E$ with respect to $(F_N)$ is defined by the formula:
$$\bar d_{(F_N)}(E):=\limsup_{N\to\infty}\frac{|F_N\cap E|}{|F_N|}$$
Note that both the upper and lower densities $\underline d_{(F_N)}$ and $\bar d_{(F_N)}$ are invariant under affine transformations. In particular, for every $u\in K^*$ we have $\underline d_{(F_N)}(E/u)=\underline d_{(F_N)}(E-u)=\underline d_{(F_N)}(E)$ and $\bar d_{(F_N)}(E/u)=\bar d_{(F_N)}(E-u)=\bar d_{(F_N)}(E)$.

The following is the first step towards the proof of Theorem \ref{teo_main}:
\begin{theorem}\label{teo_double}
Let $E\subset K$ be such that $\bar d_{(F_N)}(E)>0$.
Then for each $\eps>0$ there is a set $D\subset K^*$ such that
$$\underline d_{(F_N)}(D)\geq\frac\epsilon{\epsilon+\bar d_{(F_N)}(E)-\bar d_{(F_N)}(E)^2}$$
and for all $u\in D$ we have
$$\bar d_{(F_N)}\big((E-u)\cap(E/u)\big)>\bar d_{(F_N)}(E)^2-\eps$$
\end{theorem}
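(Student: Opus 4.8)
\textbf{Proof proposal for Theorem \ref{teo_double}.}

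The plan is to deduce this density statement from the ergodic Theorem \ref{thm_introduction} via the correspondence principle (Theorem \ref{prop_correspondence}, referenced in the text). First I would apply the correspondence principle to the set $E$ with $\bar d_{(F_N)}(E)>0$: this produces a probability space $(\Omega,\mathcal B,\mu)$, a measure preserving action $(T_g)_{g\in\AK}$ of the affine group, and a set $B\in\mathcal B$ with $\mu(B)\geq\bar d_{(F_N)}(E)$ (or $=\bar d_{(F_N)}(E)$, depending on the exact formulation), such that for every $u\in K^*$ the density of the combinatorial intersection $(E-u)\cap(E/u)$ is controlled from below by $\mu(A_{-u}B\cap M_{1/u}B)$ — more precisely, $\bar d_{(F_N)}\big((E-u)\cap(E/u)\big)\geq\mu(A_{-u}B\cap M_{1/u}B)$, with $A_{-u},M_{1/u}$ the transformations appearing in Theorem \ref{thm_introduction}. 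Granting this, it suffices to show that the set $D$ of $u\in K^*$ for which $\mu(A_{-u}B\cap M_{1/u}B)>\mu(B)^2-\eps$ has lower density at least $\eps/\big(\eps+\mu(B)-\mu(B)^2\big)$, which is exactly the claimed bound with $\bar d_{(F_N)}(E)$ replaced by $\mu(B)$ (and one then uses monotonicity of the bound in $\mu(B)$ if $\mu(B)>\bar d_{(F_N)}(E)$, noting the function $t\mapsto\eps/(\eps+t-t^2)$ is decreasing on $[0,1/2]$ and the relevant regime is where the stated bound is meaningful).

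The heart of the argument is then a routine but careful averaging/Markov-type estimate. Write $f(u):=\mu(A_{-u}B\cap M_{1/u}B)$, so $0\leq f(u)\leq\mu(B)$ for every $u$. By Theorem \ref{thm_introduction},
$$\liminf_{N\to\infty}\frac1{|F_N|}\sum_{u\in F_N}f(u)\geq\mu(B)^2.$$
Let $D=\{u\in K^*:f(u)>\mu(B)^2-\eps\}$ and let $\alpha=\underline d_{(F_N)}(D)$. Splitting the average over $F_N$ into the part in $D$ (where $f(u)\leq\mu(B)$) and the part outside $D$ (where $f(u)\leq\mu(B)^2-\eps$), and taking a suitable sequence of $N$ along which $|D\cap F_N|/|F_N|\to\alpha$, one gets
$$\mu(B)^2\leq\alpha\,\mu(B)+(1-\alpha)\big(\mu(B)^2-\eps\big).$$
Solving this linear inequality for $\alpha$ yields $\alpha\big(\mu(B)-\mu(B)^2+\eps\big)\geq\eps$, i.e. $\alpha\geq\eps/\big(\eps+\mu(B)-\mu(B)^2\big)$, which is the desired lower bound. (One must be slightly careful choosing the subsequence: take $N_k$ realizing the $\limsup$ defining $\underline d_{(F_N)}(D)$ — actually the $\liminf$ — and pass to a further subsequence along which the whole average $\frac1{|F_N|}\sum f(u)$ converges; since that limit is $\geq\mu(B)^2$ the inequality survives. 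The bound on $f$ outside $D$ must be the non-strict $f(u)\leq\mu(B)^2-\eps$, which holds on the complement of the open condition defining $D$.)

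The main obstacle is not the averaging step, which is elementary, but verifying that the correspondence principle delivers exactly the inequality $\bar d_{(F_N)}\big((E-u)\cap(E/u)\big)\geq\mu(A_{-u}B\cap M_{1/u}B)$ uniformly in $u$, with the right identification of $A_{-u}$ and $M_{1/u}$ as elements of the acting group, and that $A_{-u}B\cap M_{1/u}B$ corresponds on the combinatorial side to $(E-u)\cap(E/u)$. This requires the version of Furstenberg correspondence stated in Theorem \ref{prop_correspondence}, which must be set up so that a \emph{single} system $(\Omega,\mathcal B,\mu,(T_g))$ simultaneously handles all the transformations $A_{-u}$ and $M_{1/u}$ (this is why one works with the full affine group action rather than two commuting transformations); once that is in place, the translation is a formal unwinding of definitions, using the affine identity $M_uA_v=A_{uv}M_u$ only to the extent needed to see that $A_{-u}$ and $M_{1/u}$ really are the images of $x\mapsto x-u$ and $x\mapsto x/u$. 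A secondary but trivial point is the passage from $\mu(B)$ back to $\bar d_{(F_N)}(E)$ in the final bound, handled by the monotonicity remark above.
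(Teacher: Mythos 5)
Your proposal is correct and follows essentially the same route as the paper: the correspondence principle (Theorem \ref{prop_correspondence}) giving a set $B$ with $\mu(B)=\bar d_{(F_N)}(E)$ exactly, followed by the Markov/averaging estimate applied to Theorem \ref{thm_introduction}, which the paper isolates as Corollary \ref{cor_estimates}. The only superfluous (and in fact backwards) step is your monotonicity hedge for the case $\mu(B)>\bar d_{(F_N)}(E)$ --- since $t\mapsto\eps/(\eps+t-t^2)$ is decreasing on $[0,1/2]$, a larger $\mu(B)$ would give a \emph{weaker} density bound than the one claimed --- but this is moot because the correspondence principle delivers equality.
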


This result is of independent interest and does not follow from Theorem \ref{teo_main}, because here we just need $E$ to satisfy $\bar d_{(F_N)}(E)>0$, not that it is a cell in a finite coloring. Theorem \ref{teo_double} will be proved in Subsection \ref{sec_preliminaries_proofs} as a consequence of Corollary \ref{cor_estimates}, which in turn is proved in Section \ref{section_proof}.

We will need the following lemma, which, roughly speaking, asserts that certain transformations of F\o lner sequences are still F\o lner sequences.
\begin{lemma}\label{lemma_folner}
Let $(F_N)$ be a double F\o lner sequence in a field $K$ and let $b\in K^*$.
Then the sequence $(bF_N)$ is also a double F\o lner sequence.
Also, if $(F_N)$ is a F\o lner sequence for the multiplicative group $(K^*,\times)$, then the sequence $\left(F_N^{-1}\right)$, where $F_N^{-1}=\{g^{-1}:g\in F_N\}$, is still a F\o lner sequence for that group.
\end{lemma}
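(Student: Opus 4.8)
The plan is to verify the two F\o lner conditions for $(bF_N)$ directly from the corresponding conditions for $(F_N)$, using the fact that multiplication by a fixed nonzero $b$ is a bijection of $K$ that commutes with dilations and that rescales additive shifts. First I would observe that $|bF_N| = |F_N|$ since $x \mapsto bx$ is injective. For the multiplicative condition, fix $x \in K^*$; then $bF_N \cap (x \cdot bF_N) = b\bigl(F_N \cap (xF_N)\bigr)$ because $y \in bF_N \cap x b F_N$ iff $y = bz$ with $z \in F_N$ and $z \in xF_N$. Hence
\[
\frac{|bF_N \cap (x \cdot bF_N)|}{|bF_N|} = \frac{|F_N \cap (xF_N)|}{|F_N|} \longrightarrow 1.
\]
For the additive condition, fix $x \in K^*$ and write $bF_N \cap (bF_N + x) = b\bigl(F_N \cap (F_N + x/b)\bigr)$, using that $by \in bF_N + x$ iff $y \in F_N + x/b$; since $x/b \in K^*$, the additive F\o lner property of $(F_N)$ applied to the element $x/b$ gives that this ratio tends to $1$. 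This establishes that $(bF_N)$ is a double F\o lner sequence.

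For the second assertion, suppose $(F_N)$ is a F\o lner sequence for $(K^*,\times)$. Fix $x \in K^*$. The key point is that inversion $g \mapsto g^{-1}$ is a bijection of $K^*$, so $|F_N^{-1}| = |F_N|$, and that $h \in F_N^{-1} \cap x F_N^{-1}$ is equivalent to $h^{-1} \in F_N$ and $(h/x)^{-1} = x h^{-1} \cdot \tfrac1{x^2}\cdot x = \dots$ — more cleanly: $h \in x F_N^{-1}$ iff $h/x \in F_N^{-1}$ iff $x/h \in F_N$, while $h \in F_N^{-1}$ iff $1/h \in F_N$. Thus $F_N^{-1} \cap x F_N^{-1} = \{1/g : g \in F_N,\ xg \in F_N\} = (F_N \cap x^{-1}F_N)^{-1}$, whose cardinality equals $|F_N \cap x^{-1}F_N|$. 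Applying the multiplicative F\o lner property of $(F_N)$ at the element $x^{-1} \in K^*$ yields
\[
\frac{|F_N^{-1} \cap x F_N^{-1}|}{|F_N^{-1}|} = \frac{|F_N \cap x^{-1} F_N|}{|F_N|} \longrightarrow 1,
\]
which is the desired conclusion.

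There is essentially no serious obstacle here: the whole argument is a bookkeeping exercise showing that the relevant set operations conjugate correctly under the bijections $x \mapsto bx$ and $x \mapsto x^{-1}$ of $K$ (respectively $K^*$). The only point requiring a little care is keeping track of which field element the F\o lner hypothesis on $(F_N)$ must be applied to — namely $x/b$ in the additive case and $x^{-1}$ in the inversion case — and checking that these lie in $K^*$ so that the hypothesis is applicable, which holds since $b, x \neq 0$. One should also note explicitly that for the second statement only the multiplicative F\o lner property is used and claimed, since inversion does not interact well with addition.
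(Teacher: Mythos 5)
Your proof is correct and follows essentially the same route as the paper's: both arguments rewrite $bF_N\cap(bF_N+x)$ as $b\bigl(F_N\cap(F_N+x/b)\bigr)$ and $F_N^{-1}\cap xF_N^{-1}$ as $\bigl(F_N\cap x^{-1}F_N\bigr)^{-1}$, then invoke the F\o lner hypothesis at the shifted elements $x/b$ and $x^{-1}$. The only cosmetic issue is the half-finished computation in the inversion step before ``more cleanly,'' which you should simply delete since the clean version that follows is complete.
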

\begin{proof}
The sequence $(bF_N)$ is trivially a F\o lner sequence for the multiplicative group.
To prove that it is also a F\o lner sequence for the additive group, let $x\in F$, we have
$$\lim_{N\to\infty}\frac{|bF_N\cap(x+bF_N)|}{|bF_N|}=\lim_{N\to\infty}\frac{\left|b\big(F_N\cap(x/b+F_N)\big)\right|}{|F_N|}=1$$
To prove that $\left(F_N^{-1}\right)$ is a F\o lner sequence for the multiplicative group note that for any finite sets $A,B\subset K$ we have $\left|A^{-1}\right|=|A|$, $(A\cap B)^{-1}=A^{-1}\cap B^{-1}$ and if $x\in K^*$ then $(xA)^{-1}=x^{-1}A^{-1}$. Putting all together we conclude that
\begin{eqnarray*}
\lim_{N\to\infty}\frac{\left|F_N^{-1}\cap(xF_N^{-1})\right|}{\left|F_N^{-1}\right|}&=&\lim_{N\to\infty}\frac{\left|\big(F_N\cap(x^{-1}F_N)\big)^{-1}\right|}{|F_N|} \\&=&\lim_{N\to\infty}\frac{\left|F_N\cap(x^{-1}F_N)\right|}{|F_N|}=1
\end{eqnarray*}

\end{proof}

\subsection{A Correspondence Principle}\label{section_correspondence}

To prove Theorem \ref{teo_double} we need an extension of Furstenberg's Correspondence Principle for an action of a group on a set (the classical versions deal with the case when the group acts on itself by translations, cf. \cite{Furstenberg77}).
\begin{theorem}\label{prop_correspondence}
Let $X$ be a set, let $G$ be a countable group and let $(\tau_g)_{g\in G}$ be an action of $G$ on $X$.
Assume that there exists a sequence $(G_N)$ of finite subsets of $X$ such that for each $g\in G$ we have the property:
\begin{equation}\label{eq_folner}\frac{\left|G_N\cap (\tau_gG_N)\right|}{|G_N|}\to1\text{  as  }N\to\infty\end{equation}
Let $E\subset X$ and assume that $\bar d_{(G_N)}(E):=\limsup_{N\to\infty}\frac{|G_N\cap E|}{|G_N|}>0$.

Then there exists a compact metric space $\Omega$, a probability measure $\mu$ on the Borel sets of $\Omega$, a $\mu$-preserving $G$-action $(T_g)_{g\in G}$ on $\Omega$, a Borel set $B\subset\Omega$ such that $\mu(B)=\bar d_{(G_N)}(E)$, and for any $k\in\N$ and $g_1,\dots,g_k\in G$ we have
$$\bar d_{(G_N)}\left(\tau_{g_1}E\cap\dots\cap \tau_{g_k}E\right)\geq\mu\left(T_{g_1}B\cap...\cap T_{g_k}B\right)$$
\end{theorem}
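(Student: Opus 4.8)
The plan is to build the dynamical system $\Omega$ from the orbit closure of the indicator function of $E$ inside a suitable shift space, following the standard Furstenberg correspondence recipe but adapted to an arbitrary (not necessarily free, not necessarily transitive) $G$-action on $X$. Concretely, I would set $\Omega = \{0,1\}^X$ with the product topology, which is compact and metrizable since $G$ — and hence the relevant part of $X$ — is countable (one may as well replace $X$ by the countable union of $G$-orbits of points appearing in the $G_N$, so that $X$ is countable). The group $G$ acts on $\Omega$ by $(T_g\omega)(x) = \omega(\tau_{g^{-1}}x)$, which is a homeomorphism. Let $\xi = 1_E \in \Omega$ and let $\Omega_0 = \overline{\{T_g\xi : g\in G\}}$; this is a $G$-invariant closed subset. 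The distinguished clopen set is $B = \{\omega\in\Omega_0 : \omega(x_0)=1\}$ for a reference point $x_0\in X$ — but here is the first subtlety: in the transitive case one fixes the identity, whereas in general I would instead keep track of \emph{all} coordinates at once. So it is cleaner to define, for each $x\in X$, the clopen set $B_x=\{\omega:\omega(x)=1\}$, note that $T_g^{-1}B_x = B_{\tau_g x}$, and work with these.

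Next I would produce the measure $\mu$. For each $N$, let $\nu_N$ be the uniform probability measure on $\{T_g\xi: g\in?\}$ — more precisely, I want to average over points of $X$, so I consider the point masses at the "shifted pictures seen from $x$": for $x\in X$ let $\omega^{(x)}\in\Omega$ be chosen so that $\omega^{(x)}(y)$ records membership of the appropriate translate, but the honest device is to let $\nu_N = \frac1{|G_N|}\sum_{x\in G_N}\delta_{\omega_x}$ where $\omega_x$ is the element of $\Omega$ obtained by "re-centering the picture of $E$ at $x$". To make this precise one needs $X$ to carry enough structure; since in our application $X=K$ with $G=\AK$ acting affinely and $G_N = F_N$, one can simply define $\omega_x(y) = 1_E$ evaluated after applying the transformation sending a base point to $x$. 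Rather than fuss over the general definition, the key point is: choose a weak-$*$ limit $\mu$ of a subsequence of $(\nu_N)$ along which $\frac{|G_N\cap E|}{|G_N|}\to \bar d_{(G_N)}(E)$. The Følner property (\ref{eq_folner}) guarantees that $\mu$ is $G$-invariant, by the usual estimate $|\nu_N(T_gC) - \nu_N(C)| \le \frac{|G_N \triangle \tau_g G_N|}{|G_N|}\to 0$ for cylinder sets $C$, which suffices since cylinders generate the Borel $\sigma$-algebra and $T_g$ is continuous.

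Finally I would verify the two assertions about $B$. For $\mu(B) = \bar d_{(G_N)}(E)$: by construction $\nu_N(B_{x_0}) = \frac{|\{x\in G_N : \tau\text{-translate lands in } E\}|}{|G_N|}$ which along our subsequence converges to $\bar d_{(G_N)}(E)$, provided $B$ is a continuity set for $\mu$ — and it is, being clopen. For the inequality, fix $g_1,\dots,g_k\in G$ and set $C = T_{g_1}B\cap\dots\cap T_{g_k}B$, a clopen set, hence a $\mu$-continuity set, so $\mu(C) = \lim \nu_N(C)$ along the subsequence. Now $\nu_N(C) = \frac1{|G_N|}\#\{x\in G_N : \omega_x \in C\}$, and by the defining relation $T_{g_i}^{-1}B = B_{\tau_{g_i} x_0}$ one checks that $\omega_x\in C$ exactly when $x$ (suitably interpreted) lies in $\tau_{g_1}E\cap\dots\cap\tau_{g_k}E$, up to the coordinate bookkeeping. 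Hence $\nu_N(C) \le \frac{|G_N\cap(\tau_{g_1}E\cap\dots\cap\tau_{g_k}E)|}{|G_N|}$, and taking $\limsup$ over \emph{all} $N$ (not merely the subsequence) on the right gives $\mu(C)\le\bar d_{(G_N)}(\tau_{g_1}E\cap\dots\cap\tau_{g_k}E)$, as desired. The main obstacle — and the place where care is genuinely needed rather than routine — is the bookkeeping in setting up $\Omega$ and the point masses $\omega_x$ so that "membership of $x$ in a translate of $E$" corresponds cleanly to "$\omega_x$ lies in a translate of $B$" when the $G$-action on $X$ is neither free nor transitive; once the dictionary $T_g^{-1}B_x = B_{\tau_g x}$ is nailed down, invariance of $\mu$ and the density inequality are the standard weak-$*$/Følner arguments.
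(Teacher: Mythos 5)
Your overall strategy (empirical measures on a symbolic space, weak-$*$ limit along a subsequence realizing the upper density, the F\o lner hypothesis for invariance, clopen sets as continuity sets) is sound and genuinely different from the paper's proof, which instead builds an invariant mean on $B(X)$ via a Banach limit and then invokes the Gelfand representation, the Riesz representation theorem and the Banach--Stone theorem. But as written your argument has a real gap, and it sits exactly where you write ``rather than fuss over the general definition'': the map $x\mapsto\omega_x$ is never defined, and it cannot be defined inside $\Omega=\{0,1\}^X$ for a general action. ``Re-centering the picture of $E$ at $x$'' requires choosing, for each $x\in G_N$, a group element $g_x$ with $\tau_{g_x}x_0=x$; this needs the action to be transitive, and even then the choice is ambiguous precisely by the stabilizer of $x_0$, which in the intended application ($G=\AK$ acting on $X=K$) is nontrivial. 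Different choices of $g_x$ give different points $\omega_x$, hence different empirical measures $\nu_N$, and nothing in your argument controls this. The dictionary $T_g^{-1}B_x=B_{\tau_gx}$ that you correctly identify does not by itself produce the points on which $\nu_N$ is supposed to be supported, so the construction of $\mu$ is incomplete.

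The repair is to change the symbolic space: take $\Omega=\{0,1\}^G$ (compact metrizable since $G$ is countable) with the shift $(T_h\omega)(g)=\omega(h^{-1}g)$, and define $\omega_x\in\Omega$ for $x\in X$ by $\omega_x(g)=1_{\tau_gE}(x)=1_E(\tau_g^{-1}x)$. This is canonical, requires neither transitivity nor freeness, and satisfies $\omega_{\tau_hx}=T_h\omega_x$, so your estimate $|\nu_N(T_hC)-\nu_N(C)|\le|G_N\,\triangle\,\tau_{h^{-1}}G_N|/|G_N|\to0$ goes through verbatim with $\nu_N=\frac1{|G_N|}\sum_{x\in G_N}\delta_{\omega_x}$. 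With $B=\{\omega\in\Omega:\omega(e)=1\}$ one has $\omega_x\in T_{g_1}B\cap\dots\cap T_{g_k}B$ if and only if $x\in\tau_{g_1}E\cap\dots\cap\tau_{g_k}E$, and the rest of your argument (clopen sets are continuity sets; the limit along the chosen subsequence is bounded by the $\limsup$ over all $N$) is correct and gives both $\mu(B)=\bar d_{(G_N)}(E)$ and the displayed inequality. With this fix your proof is complete and arguably more elementary than the paper's functional-analytic construction; what the paper's route buys is that it never needs a concrete model at all, producing $\Omega$ abstractly from the separable $C^*$-algebra generated by the translates of $1_E$, which is why the issue of ``where to center'' never arises there.
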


\begin{proof}
Define the family of sets
$${\mathcal S}:=\left\{\bigcap_{j=1}^k\tau_{g_j}E:k\in\N, g_j\in G\ \forall j=1,\dots,k\right\}\cup\{X\}$$
Note that ${\mathcal S}$ is countable, so using a diagonal procedure we can find a subsequence $(\tilde G_N)$ of the sequence $(G_N)$ such that $\displaystyle\bar d_{(G_N)}(E)=\lim_{N\to\infty}|E\cap \tilde G_N|/|\tilde G_N|$ and, for each $S\in{\mathcal S}$, the following limit exists
$$\lim_{N\to\infty}\frac{|S\cap \tilde G_N|}{|\tilde G_N|}$$

Note that (\ref{eq_folner}) holds for any subsequence of $(G_N)$, and in particular for $(\tilde G_N)$.
Let $B(X)$ be the space of all bounded complex-valued functions on $X$.
The space $B(X)$ is a Banach space with respect to the norm $\|f\|=\sup_{x\in X}|f(x)|$.
Let $\rho\in\ell^\infty(\N)^*$ be a Banach limit\footnote{This means that $\rho:\ell^\infty(\N)\to\C$ is a shift invariant positive linear functional such that for any convergent sequence ${\bf x}=(x_n)\in\ell^\infty(\N)$ we have $\rho({\bf x})=\lim x_n$.}.

Define the linear functional $\lambda:B(X)\to \C$ by
$$\lambda(f)=\rho\left(\left(\frac1{|\tilde G_N|}\sum_{x\in \tilde G_N}f(x)\right)_{N\in\N}\right)$$

The functional $\lambda$ is positive (i.e. if $f\geq0$ then $\lambda(f)\geq0$) and $\lambda(1)=1$.
For any $f\in B(X)$, $g\in G$ and $x\in X$, the equation $f_g(x)=f(\tau_gx)$ defines a new function $f_g\in B(X)$.
By (\ref{eq_folner}) we have that $\lambda(f_g)=\lambda(f)$ for all $g\in G$, so $\lambda$ is an invariant mean for the action $(\tau_g)_{g\in G}$.
Moreover, $\bar d_{(G_N)}(E)=\lambda(1_E)$ and, for any $S\in\mathcal S$, we have $\bar d_{(G_N)}(S)\geq\lambda(1_S)$.

Note that the Banach space $B(X)$ is a commutative $C\sp*$-algebra (with the involution being pointwise conjugation).
Now let $Y\subset B(X)$ be the (closed) subalgebra generated by the indicator functions of sets in ${\mathcal S}$.
Then $Y$ is itself a $C^*$-algebra.
It has an identity (the constant function equal to $1$) because $X\in{\mathcal S}$.
If $f\in Y$ then $f_g\in Y$ for all $g\in G$.
Moreover, since ${\mathcal S}$ is countable, $Y$ is separable.
Thus, by the Gelfand representation theorem (cf. \cite{Averson76}, Theorem 1.1.1), there exists a compact metric space $\Omega$ and a map $\Phi:Y\to C(\Omega)$ which is simultaneously an algebra isomorphism and a homeomorphism.

The linear functional $\lambda$ induces a positive linear functional $L$ on $C(\Omega)$ by $L\big(\Phi(f)\big)=\lambda(f)$. Applying the Riesz Representation Theorem we have a measure $\mu$ on the Borel sets of $\Omega$ such that
$$\lambda(f)=L\big(\Phi(f)\big)=\int_\Omega\Phi(f)d\mu\qquad\qquad\forall \,f\in B(X)$$

The action $(\tau_g)_{g\in G}$ induces an anti-action (or right action) $(U_g)_{g\in G}$ on $C(\Omega)$ by $U_g\Phi(f)=\Phi(f_g)$, where $f_g(x)=f(\tau_gx)$ for all $g\in G$, $f\in Y$ and $x\in X$.
It is not hard to see that, for each $g\in G$, $U_g$ is a positive invertible isometry of $C(\Omega)$.
By the Banach-Stone theorem (\cite{Stone37}), for each $g\in G$, there is a homeomorphism $T_g:\Omega\to\Omega$ such that $U_g\phi=\phi\circ T_g$ for all $\phi\in C(\Omega)$.
Moreover for all $g,h\in G$ we have $\phi\circ T_{gh}=U_{gh}\phi=U_hU_g\phi=U_h(\phi\circ T_g)=\phi\circ(T_g\circ T_h)$. This means that $(T_g)_{g\in G}$ is an action  of $G$ on $\Omega$.
For every $f\in Y$ we have $\lambda(f_g)=\lambda(f)$ and hence
\begin{eqnarray*}
\int_\Omega\Phi(f)\circ T_gd\mu&=&\int_\Omega U_g\Phi(f)d\mu=\int_\Omega\Phi(f_g)d\mu\\&=&\lambda(f_g)=\lambda(f)=\int_\Omega\Phi(f)d\mu
\end{eqnarray*}
Therefore the action $(T_g)$ preserves measure $\mu$.

Note that the only idempotents of the algebra $C(\Omega)$ are indicator functions of sets.
Therefore, given any set $S\in\mathcal S$, the Gelfand transform $\Phi(1_S)$ of the characteristic function $1_S$ of $S$ is the characteristic function of some Borel subset (which we denote by $\Phi(S)$) in $\Omega$.
In other words, $\Phi(S)$ is such that $\Phi(1_S)=1_{\Phi(S)}$.
Let $B=\Phi(E)$.
We have
$$\bar d_{(G_N)}(E)=\lambda(1_E)=\int_\Omega\Phi(1_E)d\mu=\int_\Omega1_Bd\mu=\mu(B)$$

Since the indicator function of the intersection of two sets is the product of the indicator functions, we conclude that for any $k\in\N$ and any $g_1,...,g_k\in G$ we have
\begin{eqnarray*}\bar d_{(G_N)}\left(\bigcap_{i=1}^k\tau_{g_i}E\right)&\geq&\lambda\left(\prod_{i=1}^k1_{\tau_{g_i}E}\right)= \int_\Omega\Phi\left(\prod_{i=1}^k1_{\tau_{g_i}E}\right)d\mu\\&=& \int_\Omega\prod_{i=1}^k\Phi\left(1_{\tau_{g_i}E}\right)d\mu= \int_\Omega\prod_{i=1}^kU_{g_i^{-1}}\Phi\left(1_E\right)d\mu\\&=&\int_\Omega\prod_{i=1}^k1_B\circ T_{g_i^{-1}}d\mu= \int_\Omega\prod_{i=1}^k1_{T_{g_i}B}d\mu=\mu\left(\bigcap_{i=1}^kT_{g_i}B\right)\end{eqnarray*}
\end{proof}
\subsection{Deriving Theorem \ref{teo_double} from ergodic results}\label{sec_preliminaries_proofs}
In this subsection we state the main ergodic results of the paper and use them to derive Theorem \ref{teo_double}.
The proof of the ergodic results will be given in Section \ref{section_proof}.
We begin by recalling Theorem \ref{thm_introduction} stated in the introduction.

\begin{theorem}\label{teo_doublergodic}[(cf. Theorem \ref{thm_introduction} in Introduction)]
Let $(\Omega,{\mathcal B},\mu)$ be a probability space and suppose that $\AK$ acts on $\Omega$ by measure preserving transformations. Let $(F_N)$ be a double F\o lner sequence on $K$.
Then for each $B\in{\mathcal B}$ we have\footnote{By slight abuse of language we use the same symbol to denote the elements (such as $M_{1/u}$ and $A_{-u}$) of $\AK$ and the measure preserving transformation they induce on $\Omega$.}
$$\lim_{N\to\infty}\frac1{|F_N|}\sum_{u\in F_N}\mu(A_{-u}B\cap M_{1/u}B)\geq\mu(B)^2$$
and, in particular the limit exists.
\end{theorem}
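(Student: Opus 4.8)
The plan is to exploit the algebraic structure of $\AK$—specifically the commutation relation \eqref{eq_affine}—to rewrite the averages $\frac1{|F_N|}\sum_{u\in F_N}\mu(A_{-u}B\cap M_{1/u}B)$ in a form where a van der Corput-type argument or a splitting into a mean-ergodic component plus an error term becomes available. First I would note that $\mu(A_{-u}B\cap M_{1/u}B)=\langle 1_B\circ A_{-u}^{-1},\,1_B\circ M_{1/u}^{-1}\rangle_{L^2(\mu)}=\langle A_u 1_B,\,M_u 1_B\rangle$, where I write $A_u,M_u$ for the Koopman operators. So the quantity to analyze is $\frac1{|F_N|}\sum_{u\in F_N}\langle A_u f,\,M_u f\rangle$ with $f=1_B$, and I want to show its liminf is at least $\|f\|_1^2=\mu(B)^2$. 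The key structural input is that conjugation by $M_u$ sends $A_v$ to $A_{uv}$; heuristically this means the additive action, viewed through the multiplicative ``lens'', gets rescaled, and the multiplicative averaging over a multiplicative Følner set should wash out the additive part.

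The main step I would carry out is a decomposition of $f$ relative to the additive subgroup $S_A\cong(K,+)$. Let $P$ denote the orthogonal projection in $L^2(\mu)$ onto the subspace of $S_A$-invariant functions, and write $f=Pf+(f-Pf)=:\tilde f+f'$. For the invariant part, $A_u\tilde f=\tilde f$ for every $u$, so $\langle A_u\tilde f,M_u\tilde f\rangle=\langle\tilde f,M_u\tilde f\rangle$; averaging over the multiplicative Følner sequence and applying the mean ergodic theorem for the $(K^*,\times)$-action (whose Følner sets are the $F_N$, by Proposition \ref{prop_folner}, possibly after passing to the image under the relevant Følner lemma) should give $\frac1{|F_N|}\sum_{u\in F_N}\langle\tilde f,M_u\tilde f\rangle\to\langle\tilde f, Q\tilde f\rangle$ where $Q$ projects onto $S_M$-invariants; one then checks this limit is $\ge(\int f\,d\mu)^2$ by positivity considerations (the constant function $\int f$ lies under both projections, $\tilde f\ge 0$, and $Q$ is a positive operator / conditional expectation onto a sub-$\sigma$-algebra, so $\langle\tilde f,Q\tilde f\rangle=\|Q\tilde f\|_2^2\ge(\int\tilde f)^2=(\int f)^2$). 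For the remainder term $f'$ I would show the average of $|\langle A_u f',\,M_u f\rangle|$ (or the mixed terms) tends to $0$; this is where \eqref{eq_affine} does the work—conjugating, one gets averages of $\langle f', M_u^{-1}A_{uv}\cdots\rangle$ and a Hilbert-space van der Corput lemma over the double Følner sequence reduces the estimate to averages of inner products $\langle A_w f', f'\rangle$ over differences $w$, which vanish in density because $f'$ has zero projection onto $S_A$-invariants and $(F_N)$ is additively Følner.

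I expect the main obstacle to be the remainder estimate: making rigorous that the ``cross'' contributions involving $f'$ average to zero. The difficulty is that $u$ appears simultaneously in an additive translation $A_{-u}$ and a multiplicative one $M_{1/u}$, so the two group actions are genuinely entangled along the diagonal $u\mapsto(A_{-u},M_{1/u})$, and one cannot simply invoke a product mean ergodic theorem. The right tool is a van der Corput / spectral argument adapted to the double Følner sequence: one forms the second-order average $\frac1{|F_N|^2}\sum_{u,u'\in F_N}\langle A_{u-u'}\,(\text{something}),\,(\text{something})\rangle$, uses $M_uA_v=A_{uv}M_u$ to turn the difference $u-u'$ into a controllable object, and exploits both the additive Følner property (to replace $F_N$ by $F_N+w$) and the multiplicative one. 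One must be careful that differences and quotients of elements of $F_N$ need not lie in any nice set, so the estimates have to be done with the Følner conditions quantitatively; Lemma \ref{lemma_folner} (that $bF_N$ and $F_N^{-1}$ remain Følner) will be needed here. A secondary technical point is justifying that the liminf is actually a limit, which should follow once the decomposition identifies the average as a convergent main term plus a term that goes to zero.

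Once Theorem \ref{teo_doublergodic} is established, deriving Theorem \ref{teo_double} is routine via the correspondence principle (Theorem \ref{prop_correspondence}) with $G=\AK$, $X=K$, the translation action, and the Følner sets $F_N$: one gets $\bar d_{(F_N)}\big((E-u)\cap(E/u)\big)\ge\mu(A_{-u}B\cap M_{1/u}B)$ for each $u$, and then a Markov/first-moment argument applied to the Cesàro average $\ge\mu(B)^2=\bar d_{(F_N)}(E)^2$ bounds below the density of the set $D$ of ``good'' $u$, yielding exactly the stated estimate $\underline d_{(F_N)}(D)\ge\eps/(\eps+\bar d_{(F_N)}(E)-\bar d_{(F_N)}(E)^2)$.
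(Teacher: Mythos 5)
Your plan is essentially the paper's proof: the same decomposition $f=P_Af+(f-P_Af)$, the same treatment of the invariant part (multiplicative mean ergodic theorem, then Cauchy--Schwarz against the constant function to get $\|P_MP_Af\|^2\ge\langle f,1\rangle^2=\mu(B)^2$), and a van der Corput argument for the remainder, with Lemma \ref{lemma_folner} supplying the rescaled F\o lner sequences. The one place where your sketch, taken literally, would go wrong is the differencing direction in the van der Corput step: you write the second-order average as $\frac1{|F_N|^2}\sum_{u,u'}\langle A_{u-u'}(\cdot),(\cdot)\rangle$, i.e.\ additive increments of the parameter. For $a_u=M_uA_{-u}f$ one has $\langle a_{u+w},a_u\rangle=\langle M_{(u+w)/u}A_{-(u+w)}f,A_{-u}f\rangle$, and the multiplicative factor $M_{(u+w)/u}$ still depends on $u$, so the average over $u$ does not separate and the mean ergodic theorem cannot be invoked. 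The paper instead applies the van der Corput lemma (Proposition \ref{lema_vdc}) with \emph{multiplicative} increments $u\mapsto bu$: then \eqref{eq_affine} and unitarity give $\langle a_{bu},a_u\rangle=\langle A_{-u(b-1/b)}f,M_{1/b}f\rangle$, where the multiplicative part $M_{1/b}f$ is independent of $u$; since $-\frac{b^2-1}{b}F_N$ is again additively F\o lner for $b\neq\pm1$, Theorem \ref{lema_vNmet} sends the $u$-average to $\langle P_Af,M_{1/b}f\rangle=0$. (Your stated reduction to ``averages of $\langle A_wf',f'\rangle$'' is also slightly off for the same reason --- the second argument comes out as $M_{1/b}f'$, not $f'$, though this costs nothing since one only needs $P_Af'=0$.) With the increments taken multiplicatively, the rest of your outline goes through exactly as in the paper's Lemma \ref{lema_bulk} and its application to $f=1_B$.
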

In the case when the action of $\AK$ is ergodic, we can replace one of the sets $B$ with another set $C$. This is the content of the next theorem.
\begin{theorem}\label{thm_supergodic}
Let $(\Omega,{\mathcal B},\mu)$ be a probability space and suppose that $\AK$ acts ergodically on $\Omega$ by measure preserving transformations.
Let $(F_N)$ be a double F\o lner sequence on $K$.
Then for any $B,C\in{\mathcal B}$ we have
$$\lim_{N\to\infty}\frac1{|F_N|}\sum_{u\in F_N}\mu(A_{-u}B\cap M_{1/u}C)=\mu(B)\mu(C)$$
and, in particular, the limit exists.
\end{theorem}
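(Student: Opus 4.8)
The plan is to reduce the ergodic averaging statement to a Hilbert-space identity and then exploit ergodicity of the $\AK$-action. First I would pass to the operator formulation: writing $f=1_B$, $h=1_C$, and letting $A_{-u}$, $M_{1/u}$ act on $L^2(\Omega,\mu)$ by the Koopman operators, the quantity $\mu(A_{-u}B\cap M_{1/u}C)$ becomes $\langle A_{-u}f,\, M_{1/u}h\rangle$, which after applying $M_{1/u}^{-1}=M_u$ (a unitary, hence inner-product-preserving) equals $\langle M_u A_{-u}f,\, h\rangle$. Using the affine identity (\ref{eq_affine}) in the form $M_u A_{-u}=A_{-u^2}M_u$, this is $\langle A_{-u^2}M_u f,\, h\rangle$. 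So the average to be evaluated is
$$\frac1{|F_N|}\sum_{u\in F_N}\big\langle A_{-u^2}M_u f,\, h\big\rangle.$$
The right-hand side target $\mu(B)\mu(C)=\langle f,1\rangle\langle 1,h\rangle$ suggests that we want to show $M_u f$ is, on average over $u\in F_N$, being pushed by $A_{-u^2}$ toward the constant function $\langle f,1\rangle\cdot 1$.

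Next I would invoke the structure already set up in the proof of Theorem \ref{teo_doublergodic} (which I am allowed to assume, since that theorem is stated earlier). The natural mechanism is a van der Corput / splitting argument: decompose $L^2(\Omega)$ into the subspace of $S_M$-invariant functions and its orthogonal complement, or more precisely analyze the behavior of the additive averages $\frac1{|F_N|}\sum_{u\in F_N}A_{-u^2}\phi$ for fixed $\phi$, using that $(F_N)$ is an additive F\o lner sequence and that $\{u^2:u\in F_N\}$ is still "equidistributed enough" — this is where the double F\o lner property enters, presumably via a lemma from Section \ref{section_proof} governing averages along $A_{-u}$ and along the squaring map. The key new input beyond Theorem \ref{teo_doublergodic} is ergodicity: since the whole group $\AK$ acts ergodically, the only functions fixed by all of $S_A$ are constants, so the "structured part" of $M_u f$ contributes exactly $\langle f,1\rangle$, and the "mixing part" averages to zero. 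Replacing $f$ by $f$ and $h$ by an arbitrary $h$ (rather than $h=f$) costs nothing in this argument because ergodicity lets us identify the limit as a genuine product of integrals rather than merely bound it below by a square; the inequality $\ge \mu(B)^2$ in Theorem \ref{teo_doublergodic} came from positivity/Cauchy-Schwarz considerations that are no longer needed once we have the exact evaluation.

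Concretely, the steps in order are: (1) rewrite the sum using the Koopman operators and (\ref{eq_affine}) as $\frac1{|F_N|}\sum_{u\in F_N}\langle A_{-u^2}M_u f, h\rangle$; (2) for each fixed $u$, or after a further reduction, use the additive F\o lner property of $(F_N)$ together with the von Neumann-type ergodic theorem for the $S_A$-action to replace $A_{-u^2}M_u f$ in the limit by its projection onto the $S_A$-invariant functions; (3) use that the $\AK$-action is ergodic to conclude the $S_A$-invariant functions are constants, so this projection is $\langle M_u f,1\rangle\cdot 1=\langle f,1\rangle\cdot 1$ (using that $M_u$ is measure-preserving); (4) substitute back to get the limit $\langle f,1\rangle\langle 1,h\rangle=\mu(B)\mu(C)$; (5) remark that the existence of the limit is part of the conclusion. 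I would make step (2) rigorous by the standard decomposition into a function in the closed span of $S_A$-invariant vectors plus a function in the closed span of $\{A_v\psi-\psi : v\in K,\ \psi\in L^2\}$, handling the latter via the F\o lner property, exactly as one does for a single amenable group action.

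The main obstacle I anticipate is controlling the interaction between the squaring $u\mapsto u^2$ and the F\o lner sequence: a priori $(F_N)$ being F\o lner for $(K,+)$ does not immediately tell us that averaging $A_{-u^2}$ over $u\in F_N$ behaves like averaging $A_{-v}$ over a F\o lner set, because squaring is two-to-one and highly non-uniform. I expect this is handled by a Lemma from Section \ref{section_proof} (the same technical device underlying Theorem \ref{teo_doublergodic}), perhaps exploiting that the relevant averages only need to converge against the weak operator topology and that the "bad" part lies in the orthocomplement of the $S_A$-invariants where one has genuine decay; getting the bookkeeping right there, and making sure ergodicity of $\AK$ (not just of $S_A$ or $S_M$ separately) is what forces the limit to factor, will be the delicate point.
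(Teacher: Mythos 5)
Your reduction in step (1) is correct and matches the paper, but there are two genuine gaps in what follows. First, step (3) is false as stated: ergodicity of the full $\AK$-action does \emph{not} imply that the $S_A$-invariant functions are constants (ergodicity of a group action does not pass to subgroups), so you cannot identify the $S_A$-projection of $M_uf$ with $\langle f,1\rangle\cdot 1$. The paper's route is to prove that $P_A$ and $P_M$ commute (Lemma \ref{lema_commute}, which uses the normality relation (\ref{eq_affine}) and the fact that $bF_N$ is again a double F\o lner sequence), so that $P=P_MP_A$ is the projection onto the $\AK$-invariant functions; the limit of $\frac1{|F_N|}\sum_u M_uA_{-u}1_B$ is then $P1_B$ (Lemma \ref{lema_bulk}), and only at that point does ergodicity of $\AK$ give $P1_B=\mu(B)$. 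Your argument needs the extra averaging over $S_M$ (applied to $P_A1_B$, using $M_uP_A=P_AM_u$) that you have skipped.

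Second, step (2) is not justified and would fail if attempted directly: the quantity $\frac1{|F_N|}\sum_{u\in F_N}\langle A_{-u^2}M_uf,h\rangle$ is a single average in which both the quadratic additive shift $-u^2$ and the multiplicative factor $M_u$ vary with $u$, so there is no inner F\o lner average to which von Neumann's theorem applies, and $\{u^2:u\in F_N\}$ is not a F\o lner sequence. You correctly flag this as the delicate point, but the resolution is not "decay on the orthocomplement of the $S_A$-invariants" by itself; it is the van der Corput trick applied to $a_u=M_uA_{-u}f$ with $P_Af=0$: the correlations collapse via (\ref{eq_affine}) and unitarity to $\langle a_{ub},a_u\rangle=\langle A_{-u(b-1/b)}f,M_{1/b}f\rangle$, whose additive shift is now \emph{linear} in $u$, so von Neumann's theorem along the dilated F\o lner sequence $-\frac{b^2-1}{b}F_N$ (Lemma \ref{lemma_folner}) gives limit $\langle P_Af,M_{1/b}f\rangle=0$ for every $b\neq\pm1$, and Proposition \ref{lema_vdc} finishes. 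This linearization of the quadratic shift in the difference terms is the key idea your proposal is missing; without it the proof does not go through.
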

\begin{remark}
We note that Theorem \ref{thm_supergodic} fails without ergodicity. Indeed, take the normalized disjoint union of two copies of the same measure preserving system. Choosing $B$ to be one of the copies and $C$ the other we get
$$A_{-u}B\cap M_{1/u}C=\emptyset\text{   for all }u\in K^*$$
\end{remark}

We can extract some quantitative bounds from Theorems \ref{teo_doublergodic} and \ref{thm_supergodic}. This is summarized in the next corollary.

\begin{corollary}\label{cor_estimates}
Let $(\Omega,{\mathcal B},\mu)$ be a probability space and suppose that $\AK$ acts on $\Omega$ by measure preserving transformations.
Let $(F_N)$ be a double F\o lner sequence on $K$, let $B\in{\mathcal B}$ and let $\epsilon>0$.
Then we have
$$\underline d_{(F_N)}\left(\left\{u\in K^*:\mu(A_{-u}B\cap M_{1/u}B)>\mu(B)^2-\eps\right\}\right)\geq\frac\epsilon{\epsilon+\mu(B)-\mu(B)^2}$$
Moreover, if the action of $\AK$ is ergodic and $B,C\in{\mathcal B}$, the set
$$D_\eps:=\left\{u\in K^*:\mu(A_{-u}B\cap M_{1/u}C)>\mu(B)\mu(C)-\eps\right\}$$ satisfies
\begin{equation}\label{eq_BC}\underline d_{(F_N)}(D_\eps)\geq\max\left(\frac\epsilon{\epsilon+\mu(B)(1-\mu(C))},\frac\epsilon{\epsilon+\mu(C)(1-\mu(B))}\right)\end{equation}
\end{corollary}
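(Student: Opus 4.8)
The plan is to derive both estimates from Theorems~\ref{teo_doublergodic} and~\ref{thm_supergodic} by a reverse-Markov-type manipulation of the Ces\`aro averages along $(F_N)$. Throughout I read the sums in those theorems as running over $u\in F_N\cap K^*$ (the only values of $u$ for which $M_{1/u}$ is defined), and I record the elementary bound $\mu(A_{-u}B\cap M_{1/u}C)\le\min\{\mu(A_{-u}B),\mu(M_{1/u}C)\}=\min\{\mu(B),\mu(C)\}$, valid because each $A_{-u}$ and each $M_{1/u}$ acts on $\Omega$ by an invertible measure preserving transformation.

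For the first assertion, set $a_u:=\mu(A_{-u}B\cap M_{1/u}B)$, put $D:=\{u\in K^*:a_u>\mu(B)^2-\eps\}$, and write $\alpha_N:=|F_N\cap D|/|F_N|$. Splitting $\sum_{u\in F_N\cap K^*}a_u$ according to whether $u\in D$, and using $a_u\le\mu(B)$ on $F_N\cap D$ together with $a_u\le\mu(B)^2-\eps$ off $D$, I would obtain, for every $N$,
$$\frac1{|F_N|}\sum_{u\in F_N\cap K^*}a_u\ \le\ \alpha_N\,\mu(B)+(1-\alpha_N)\big(\mu(B)^2-\eps\big)=\big(\mu(B)^2-\eps\big)+\alpha_N\big(\mu(B)-\mu(B)^2+\eps\big).$$
Since $\liminf_N\alpha_N=\underline d_{(F_N)}(D)$ and the coefficient of $\alpha_N$ on the right is strictly positive, passing to the $\liminf$ in $N$ on both sides --- while Theorem~\ref{teo_doublergodic} guarantees that the left-hand side tends to a limit that is at least $\mu(B)^2$ --- gives $\mu(B)^2\le(\mu(B)^2-\eps)+\underline d_{(F_N)}(D)\big(\mu(B)-\mu(B)^2+\eps\big)$, and rearranging this is exactly the claimed inequality.

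For the ergodic case I would run the very same argument with $a_u:=\mu(A_{-u}B\cap M_{1/u}C)$ and $D_\eps$ in place of $D$, now invoking Theorem~\ref{thm_supergodic} so that the left-hand side converges to $\mu(B)\mu(C)$. On $F_N\cap D_\eps$ there are two available pointwise bounds, $a_u\le\mu(C)$ and $a_u\le\mu(B)$; feeding the first through the computation yields $\underline d_{(F_N)}(D_\eps)\ge\eps/\big(\eps+\mu(C)(1-\mu(B))\big)$, feeding the second yields $\underline d_{(F_N)}(D_\eps)\ge\eps/\big(\eps+\mu(B)(1-\mu(C))\big)$, and taking the larger of the two is (\ref{eq_BC}).

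I do not anticipate a genuine obstacle: once Theorems~\ref{teo_doublergodic} and~\ref{thm_supergodic} are granted, this corollary is a short averaging exercise. The only points requiring care are that one must pass to the $\liminf$ (rather than the $\limsup$) of $\alpha_N$ --- which is legitimate precisely because the right-hand side above is increasing in $\alpha_N$ --- and the harmless convention for the undefined term at $u=0$, which in any case contributes nothing to the limits. All of the genuine content lies in Section~\ref{section_proof}, where Theorems~\ref{teo_doublergodic} and~\ref{thm_supergodic} are proved.
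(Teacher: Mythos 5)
Your proposal is correct and follows essentially the same reverse-Markov averaging argument as the paper: split the Ces\`aro average according to membership in $D_\eps$, bound the two pieces by $\min\{\mu(B),\mu(C)\}$ and $\mu(B)\mu(C)-\eps$ respectively, and compare with the limit supplied by Theorems~\ref{teo_doublergodic} and~\ref{thm_supergodic}. The only (harmless) differences are that the paper extracts a subsequence along which the density of $D_\eps$ converges, whereas you pass directly to the $\liminf$ using the monotonicity of the right-hand side in $\alpha_N$, and that you obtain the two bounds in (\ref{eq_BC}) from the two trivial estimates $a_u\le\mu(B)$ and $a_u\le\mu(C)$ rather than by ``switching the roles of $B$ and $C$''.
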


Corollary \ref{cor_estimates} will be proved in Section \ref{section_proof}. We will use it now, together with the correspondence principle, to deduce Theorem \ref{teo_double}.

\begin{proof}[Proof of Theorem \ref{teo_double}]
Let $X=K$, let $G=\AK$ and let $(G_N)=(F_N)$. Applying the correspondence principle (Theorem \ref{prop_correspondence}), we obtain, for each $E\subset K$, a measure preserving action $(T_g)_{g\in\AK}$ of $\AK$ on a probability space $(\Omega,{\mathcal B},\mu)$, a set $B\in{\mathcal B}$ such that $\mu(B)=\bar d_{(F_N)}(E)$, and for all $u\in K^*$ we have $\bar d_{(F_N)}(A_{-u}E\cap M_{1/u}E)\geq\mu(T_{A_{-u}}B\cap T_{M_{1/u}}B)$.
To simplify notation we will denote the measure preserving transformations $T_{A_{-u}}$ and $T_{M_{1/u}}$ on $\Omega$ by just $A_{-u}$ and $M_{1/u}$. Also, recalling that $A_{-u}E=E-u$ and $M_{1/u}E=E/u$ we can rewrite the previous equation as
$$\bar d_{(F_N)}(E-u\cap E/u)\geq\mu(A_{-u}B\cap M_{1/u}B)\qquad\qquad\forall u\in K^*$$

Now assume that $\bar d_{(F_N)}(E)>0$ and let $\epsilon>0$.
Let
$$D_\eps:=\{u\in K^*:\bar d_{(F_N)}\big((E-u)\cap (E/u)\big)>\bar d_{(F_N)}(E)^2-\eps\}$$
By Corollary \ref{cor_estimates} we have
\begin{eqnarray*}
\underline d_{(F_N)}(D_\eps)&\geq&\underline d_{(F_N)}\left(\left\{u\in K^*:\mu(A_{-u}B\cap M_{1/u}B)>\mu(B)^2-\eps\right\}\right)\\&\geq&\frac\epsilon{\epsilon+\mu(B)-\mu(B)^2}\\&=& \frac\epsilon{\epsilon+\bar d_{(F_N)}(E)-\bar d_{(F_N)}(E)^2}
\end{eqnarray*}
\end{proof}

\subsection{Some classical results}
We will need to use two results that are already in the literature.
The first is a version of the classical van der Corput trick for unitary representations of countable abelian groups. For a proof see Lemma 2.9 in \cite{Bergelson_Leibman_McCutcheon05}.
\begin{proposition}\label{lema_vdc}
Let $H$ be an Hilbert space, let $(a_u)_{u\in K^*}$ be a bounded sequence in $H$ indexed by $K^*$. If for all $b$ in a co-finite subset of $K^*$ we have
$$
\limsup_{N\to\infty}\left|\frac1{|F_N|}\sum_{u\in F_N}\langle a_{bu},a_u\rangle\right|=0$$
then also
$$\lim_{N\to\infty}\frac1{|F_N|}\sum_{u\in F_N}a_u=0$$
\end{proposition}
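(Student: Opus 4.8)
The plan is to carry out the classical Hilbert space van der Corput difference argument, taking the abelian group to be $(K^*,\times)$ acting on the index set by $a_u\mapsto a_{bu}$, with $(F_N)$ — which is in particular a F\o lner sequence for $(K^*,\times)$ by Definition \ref{def_doublefolner} — serving as the averaging sets. Write $C:=\sup_{u\in K^*}\|a_u\|<\infty$ and $v_N:=\frac1{|F_N|}\sum_{u\in F_N}a_u$ (the term $u=0$, if present, contributes $O(1/|F_N|)$ and may be ignored), and let $B_0\subset K^*$ be the finite complement of the co-finite set of $b$ for which the hypothesis holds. The goal is to show $\|v_N\|\to0$.

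First I would record an approximate-invariance estimate: for each fixed $b\in K^*$, the change of variables $w=bu$ together with the multiplicative F\o lner property gives
$$\Big\|\frac1{|F_N|}\sum_{u\in F_N}a_{bu}-v_N\Big\|\le C\,\frac{|bF_N\triangle F_N|}{|F_N|},$$
and the right-hand side tends to $0$ as $N\to\infty$. Hence, for any finite $Q\subset K^*$, averaging the inner expression over $b\in Q$ does not change the limiting behaviour: $\limsup_N\big\|\frac1{|Q|}\sum_{b\in Q}\frac1{|F_N|}\sum_{u\in F_N}a_{bu}\big\|=\limsup_N\|v_N\|$. The purpose of inserting this extra average is that, after interchanging the $b$- and $u$-averages and applying the Cauchy--Schwarz (Jensen) inequality over $u\in F_N$,
$$\Big\|\frac1{|F_N|}\sum_{u\in F_N}\frac1{|Q|}\sum_{b\in Q}a_{bu}\Big\|^2\le\frac1{|F_N|}\sum_{u\in F_N}\Big\|\frac1{|Q|}\sum_{b\in Q}a_{bu}\Big\|^2=\frac1{|Q|^2}\sum_{b,b'\in Q}\frac1{|F_N|}\sum_{u\in F_N}\langle a_{bu},a_{b'u}\rangle.$$
For an off-diagonal pair $b\ne b'$, the substitution $w=b'u$ and the F\o lner property turn $\frac1{|F_N|}\sum_{u\in F_N}\langle a_{bu},a_{b'u}\rangle$ into $\frac1{|F_N|}\sum_{w\in F_N}\langle a_{(b/b')w},a_w\rangle+o(1)$, so if the ratio $b/b'$ avoids $B_0$ the hypothesis forces this quantity to tend to $0$; the $|Q|$ diagonal terms are each bounded by $C^2$ in absolute value.

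To exploit this I need, for every $m$, a set $Q=Q_m\subset K^*$ with $|Q_m|=m$ and $b/b'\notin B_0$ for all distinct $b,b'\in Q_m$. Such $Q_m$ can be built greedily: once $b_1,\dots,b_k$ are chosen, the forbidden positions for $b_{k+1}$ form the finite set $\bigcup_{i\le k}(b_iB_0\cup b_iB_0^{-1})$, which cannot cover the infinite field $K$. With this choice, taking $\limsup_N$ in the displayed inequality and bounding the $\limsup$ of a finite sum by the sum of the individual $\limsup$s, every off-diagonal contribution vanishes and we obtain $\big(\limsup_N\|v_N\|\big)^2\le|Q_m|\,C^2/|Q_m|^2=C^2/m$. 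Letting $m\to\infty$ yields $\limsup_N\|v_N\|=0$, i.e. $v_N\to0$, as claimed.

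I expect the only step that is not entirely routine to be the construction of the sets $Q_m$ — equivalently, the observation that one may average over multiplicative translates whose pairwise ratios all land in the prescribed co-finite subset of $K^*$; the infinitude of $K$ makes this immediate via the greedy argument above, and everything else is the standard van der Corput computation together with the F\o lner property of $(F_N)$.
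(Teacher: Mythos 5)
Your proof is correct, and it is essentially the argument the paper is implicitly relying on: the paper does not prove Proposition \ref{lema_vdc} itself but cites Lemma 2.9 of \cite{Bergelson_Leibman_McCutcheon05}, whose proof is exactly this standard van der Corput scheme (insert an auxiliary average over a finite set $Q$ of multiplicative translates, use the F\o lner property of $(F_N)$ for $(K^*,\times)$ to justify the insertion and the changes of variable, expand the square via Cauchy--Schwarz, and kill the off-diagonal terms by the hypothesis). Your only added wrinkle --- greedily choosing $Q_m$ so that all pairwise ratios avoid the finite exceptional set $B_0$, which is possible because $K^*$ is infinite --- is precisely the point that the ``co-finite subset'' formulation of the hypothesis is designed to accommodate, and it is handled correctly.
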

Another result we will need is von Neumann's mean ergodic theorem. See, for instance Theorem 5.5 in \cite{Bergelson06} for a proof of this version.
\begin{theorem}\label{lema_vNmet}
Let $G$ be a countable abelian group and let $(F_N)$ be a F\o lner sequence in $G$.
Let $H$ be a Hilbert space and let $(U_g)_{g\in G}$ be a unitary representation of $G$ on $H$.
Let $P$ be the orthogonal projection onto the subspace of vectors fixed under $G$.
Then
$$\lim_{N\to\infty}\frac1{|F_N|}\sum_{g\in F_N}U_gf=Pf\qquad\qquad\forall f\in H$$
in the strong topology of $H$.
\end{theorem}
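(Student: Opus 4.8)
The plan is to prove the theorem by the classical splitting of $H$ into invariant vectors and their orthogonal complement. Write $H_0=\{f\in H:U_gf=f\text{ for all }g\in G\}$ for the closed subspace of $G$-fixed vectors, so that $P$ is the orthogonal projection onto $H_0$, and abbreviate $\mathcal A_Nf=\frac1{|F_N|}\sum_{g\in F_N}U_gf$. Since each $U_g$ is unitary, $\mathcal A_N$ is an average of isometries, hence $\|\mathcal A_N\|\le 1$ for every $N$; this contraction bound is what lets me reduce the convergence claim to a dense subspace via an $\eps/3$-argument.

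First I would dispatch the easy piece: if $f\in H_0$ then $\mathcal A_Nf=f=Pf$ for all $N$, so there is nothing to prove. The substance of the theorem is the assertion that $\mathcal A_Nf\to0$ for every $f\in H_0^\perp$, and for this I would work with the linear subspace $V=\operatorname{span}\{U_gh-h:g\in G,\ h\in H\}$. A short duality computation identifies its closure: a vector $f$ is orthogonal to every $U_gh-h$ precisely when $\langle U_g^{-1}f-f,h\rangle=0$ for all $g$ and $h$, i.e.\ when $U_g^{-1}f=f$ for all $g$, which (since $U_{g^{-1}}=U_g^{-1}$) says exactly $f\in H_0$; hence $V^\perp=H_0$ and $\overline V=H_0^\perp$. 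On a generator $f=U_gh-h$ one computes, using that $G$ is abelian so that $\{g'g:g'\in F_N\}=F_Ng$,
\[
\mathcal A_N(U_gh-h)=\frac1{|F_N|}\Bigl(\sum_{g'\in F_Ng}U_{g'}h-\sum_{g'\in F_N}U_{g'}h\Bigr),
\]
and the terms indexed by $F_Ng\cap F_N$ cancel, leaving $\|\mathcal A_N(U_gh-h)\|\le\dfrac{|F_Ng\,\triangle\,F_N|}{|F_N|}\,\|h\|$, which tends to $0$ by the F\o lner property of $(F_N)$. By linearity $\mathcal A_Nf\to0$ for all $f\in V$, and then the $\eps/3$-argument (approximate $f\in H_0^\perp$ by an element of $V$, use $\|\mathcal A_N\|\le1$) gives $\mathcal A_Nf\to0$ on all of $H_0^\perp$.

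Finally I would combine the two cases: an arbitrary $f\in H$ decomposes as $f=Pf+(f-Pf)$ with $Pf\in H_0$ and $f-Pf\in H_0^\perp$, so $\mathcal A_Nf=Pf+\mathcal A_N(f-Pf)\to Pf$ in norm. I do not anticipate a serious obstacle, since this is the standard mean ergodic theorem; the only points that deserve attention are the orthogonality identity $\overline V=H_0^\perp$ — routine, but it is the structural heart of the proof — and invoking the F\o lner condition with the correct translate ($F_Ng$ rather than $gF_N$, a distinction that is harmless here because $G$ is abelian). Alternatively, one could simply cite the abstract mean ergodic theorem for F\o lner averages, which is what the reference in the text does.
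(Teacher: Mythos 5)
Your proof is correct and complete: the decomposition $H=H_0\oplus\overline{V}$ with $V=\operatorname{span}\{U_gh-h\}$, the identification $V^\perp=H_0$, the telescoping estimate $\|\mathcal A_N(U_gh-h)\|\le|F_Ng\,\triangle\,F_N|\,\|h\|/|F_N|$, and the extension to $\overline V$ via $\|\mathcal A_N\|\le1$ are all sound. The paper does not prove this theorem at all --- it cites it (Theorem 5.5 of the reference given there) --- and your argument is precisely the standard proof that the cited source supplies, so you have simply filled in the delegated step.
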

\section{Proof of the main theorem}\label{section_proof}
In this section we will prove Theorems \ref{teo_doublergodic} and \ref{thm_supergodic} and Corollary \ref{cor_estimates}.
Throughout this section let $K$ be a countable field, let $(\Omega,{\mathcal B},\mu)$ be a probability space, let $(T_g)_{g\in\AK}$ be a measure preserving action of $\AK$ on $\Omega$ and let $(F_N)$ be a double F\o lner sequence on $K$.

Let $H=L^2(\Omega,\mu)$ and let $(U_g)_{g\in \AK}$ be the unitary Koopman representation of $\AK$ (this means that $(U_gf)(x)=f(g^{-1}x)$).
By a slight abuse of notation we will write $A_uf$ instead of $U_{A_u}f$ and $M_uf$ instead of $U_{M_u}f$.

Let $P_A$ be the orthogonal projection from $H$ onto the subspace of vectors which are fixed under the action of the additive subgroup $S_A$ and let $P_M$ be the orthogonal projection from $H$ onto the subspace of vectors which are fixed under the action of the multiplicative subgroup $S_M$.

We will show that the orthogonal projections $P_A$ and $P_M$ commute, which is surprising considering that the subgroups $S_A$ and $S_M$ do not. The reason for this is that for each $k\in K^*$, the map $M_k:K\to K$ is an isomorphism of the additive group.
\begin{lemma}\label{lema_commute}
For any $f\in H$ we have
$$P_AP_Mf=P_MP_Af$$
\end{lemma}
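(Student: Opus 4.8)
\textbf{Proof plan for Lemma \ref{lema_commute}.}

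The plan is to establish the key identity $M_k P_A = P_A M_k$ for every $k \in K^*$, and then show that $P_M$ is built out of averages over $S_M \cong (K^*, \times)$ in a way that commutes with $P_A$. The crucial structural input is that $S_A$ is normal in $\AK$: from \eqref{eq_affine} we have $M_k A_v M_k^{-1} = A_{kv}$, so conjugation by $M_k$ permutes the additive subgroup $S_A$. In terms of the Koopman representation this means $U_{M_k}$ normalizes the family $\{U_{A_v} : v \in K\}$, and hence $U_{M_k}$ maps the subspace $H_A := \{f : U_{A_v} f = f \ \forall v\}$ of $S_A$-fixed vectors onto itself (the inverse image of an $S_A$-fixed vector under $U_{M_k}$ is again $S_A$-fixed, since $U_{A_v} U_{M_k}^{-1} f = U_{M_k}^{-1} U_{A_{k^{-1}v}} f = U_{M_k}^{-1} f$). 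Because $H_A$ is $U_{M_k}$-invariant and $U_{M_k}$ is unitary, $U_{M_k}$ commutes with the orthogonal projection $P_A$ onto $H_A$; that is, $M_k P_A = P_A M_k$ for all $k \in K^*$.

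Next I would compute $P_M$ via the mean ergodic theorem (Theorem \ref{lema_vNmet}) applied to the abelian group $(K^*, \times)$ with a F\o lner sequence $(\Phi_M)$ for it: $P_M f = \lim_{M} \frac{1}{|\Phi_M|} \sum_{k \in \Phi_M} M_k f$ in the strong topology. Applying $P_A$ (a bounded operator) and using the identity from the previous paragraph termwise,
\[
P_A P_M f = \lim_{M} \frac{1}{|\Phi_M|} \sum_{k \in \Phi_M} P_A M_k f = \lim_{M} \frac{1}{|\Phi_M|} \sum_{k \in \Phi_M} M_k P_A f = P_M (P_A f),
\]
where the last equality is again Theorem \ref{lema_vNmet}, now applied to the vector $P_A f$. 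This gives $P_A P_M f = P_M P_A f$ for all $f \in H$, as desired. (One could alternatively run the same argument with an additive F\o lner sequence to express $P_A$ as an average of the $A_v$; but expressing $P_M$ as the average is cleaner since it is the $M_k$'s that interact nicely with $P_A$.)

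The one point that needs a little care — and is the main obstacle — is the very first step: checking that $U_{M_k}$ genuinely preserves $H_A$ in both directions, so that it commutes with $P_A$ rather than merely intertwining $P_A$ with some other projection. This hinges entirely on $S_A$ being \emph{normal}, i.e.\ on \eqref{eq_affine}: conjugation by $M_k$ sends $A_v \mapsto A_{kv}$ and sends the whole of $S_A$ onto itself (since $k \mapsto kv$ is a bijection of $K$ for fixed $k \in K^*$), so the fixed-point subspace $H_A$ is genuinely invariant. Had $S_A$ not been normal this subspace would not be $U_{M_k}$-invariant and the conclusion would fail; this is exactly the phenomenon flagged in the sentence preceding the lemma, that the projections commute even though the groups do not. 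Everything else is routine: $P_A$ is a contraction, so it passes through the strong limit in the ergodic averages, and the mean ergodic theorem supplies both representations of $P_M$.
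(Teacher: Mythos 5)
Your proof is correct, and it shares the paper's overall skeleton (first show $M_kP_A=P_AM_k$ for every $k\in K^*$, then realize $P_M$ as a strong limit of multiplicative ergodic averages via Theorem \ref{lema_vNmet} and pass the bounded operator $P_A$ through the limit), but you prove the key commutation identity by a genuinely different route. The paper also represents $P_A$ itself as a limit of additive averages $\frac1{|F_N|}\sum_{u\in F_N}A_uf$, pushes $M_k$ through using \eqref{eq_affine}, and then invokes Lemma \ref{lemma_folner} to recognize the resulting average over $kF_N$ as converging to $P_AM_kf$. You instead argue abstractly: since $S_A$ is normal and conjugation by $M_k$ is a bijection of $S_A$, the fixed subspace $H_A$ is a reducing subspace for the unitary $U_{M_k}$ (you correctly check invariance in both directions), and a unitary that preserves a closed subspace together with its orthogonal complement commutes with the projection onto it. Your version buys a cleaner argument that does not need Lemma \ref{lemma_folner} nor amenability of $(K,+)$ for this step — only normality of $S_A$ — while the paper's version stays entirely within the ergodic-average calculus it has already set up and reuses its Følner machinery. (The index $A_{k^{-1}v}$ versus $A_{kv}$ in your conjugation computation depends on the composition convention, but since $v\mapsto kv$ is a bijection of $K$ this is immaterial to the argument.)
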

\begin{proof}We first prove that for any $k\in K^*$, the projection $P_A$ commutes with $M_k$.
For this we will use Theorem \ref{lema_vNmet}, Lemma \ref{lemma_folner} and equation (\ref{eq_affine}):

\begin{eqnarray*}M_kP_Af&=&M_k\left(\lim_{N\to\infty}\frac1{|F_N|}\sum_{u\in F_N}A_uf\right)=\lim_{N\to\infty}\frac1{|F_N|}\sum_{u\in F_N}M_kA_uf\\&=&\lim_{N\to\infty}\frac1{|F_N|}\sum_{u\in F_N}A_{ku}M_kf
=\lim_{N\to\infty}\frac1{|F_N|}\sum_{u\in kF_N}A_uM_kf=P_AM_kf
\end{eqnarray*}
Now we can conclude the result:
$$P_MP_Af=\lim_{N\to\infty}\frac1{|F_N|}\sum_{u\in F_N}M_uP_Af=P_A\left(\lim_{N\to\infty}\frac1{|F_N|}\sum_{u\in F_N}M_uf\right)=P_AP_Mf$$
\end{proof}

Lemma \ref{lema_commute} implies that $P_MP_Af$ is invariant under both $S_A$ and $S_M$. Since those two subgroups generate $\AK$, this means that $P_MP_A$ is the orthogonal projection onto the space of  functions invariant under $\AK$.

Let $P:H\to H$ be the orthogonal projection onto the space of functions invariant under the action of the group $\AK$.
We have $P=P_AP_M=P_MP_A$.

The bulk of the proofs of Theorems \ref{teo_doublergodic} and \ref{thm_supergodic} is the next lemma.

\begin{lemma}\label{lema_bulk}
Let $f\in H=L^2(\Omega,\mu)$.
We have
$$\lim_{N\to\infty}\frac1{|F_N|}\sum_{u\in F_N}M_uA_{-u}f=Pf$$
In particular, the limit exists.
\end{lemma}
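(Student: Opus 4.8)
The plan is to use the van der Corput trick (Proposition \ref{lema_vdc}) applied to the sequence $a_u = M_uA_{-u}f$ in $H$, after first subtracting off the ``expected'' limit. Write $f = Pf + f'$ where $Pf$ is $\AK$-invariant and $f' = f - Pf$; since $M_uA_{-u}(Pf) = Pf$ for every $u$ (as $Pf$ is fixed by all of $\AK$), the average $\frac1{|F_N|}\sum_{u\in F_N}M_uA_{-u}(Pf)$ is identically $Pf$, so it suffices to show that $\frac1{|F_N|}\sum_{u\in F_N}M_uA_{-u}f' \to 0$. Thus I may assume $Pf = 0$, i.e. $P_AP_Mf = P_MP_Af = 0$, and the goal becomes $\lim_{N}\frac1{|F_N|}\sum_{u\in F_N}M_uA_{-u}f = 0$.

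First I would set $a_u = M_uA_{-u}f$, which is a bounded sequence in $H$ (each $M_uA_{-u}$ is unitary, so $\|a_u\| = \|f\|$). To apply Proposition \ref{lema_vdc} over the countable abelian group $(K^*,\times)$ with F\o lner sequence $(F_N)$ — note $(F_N)$ is a double F\o lner sequence, hence F\o lner for $(K^*,\times)$ — I need to estimate the correlations $\langle a_{bu}, a_u\rangle$ for $b$ in a co-finite subset of $K^*$. Compute, using $a_{bu} = M_{bu}A_{-bu}f$ and the adjoint relation,
\[
\langle a_{bu}, a_u\rangle = \langle M_{bu}A_{-bu}f,\ M_uA_{-u}f\rangle = \langle A_u M_u^{-1} M_{bu} A_{-bu} f,\ f\rangle.
\]
Now $M_u^{-1}M_{bu} = M_b$, and using the commutation identity \eqref{eq_affine} in the form $M_b A_{-bu} = A_{-u}M_b$ (since $M_bA_v = A_{bv}M_b$ with $v = -bu$ gives $A_{-bu}$... let me instead push things to normal form), I would simplify $A_u M_b A_{-bu}$ to an explicit affine transformation and then a multiplication. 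The upshot should be that $\langle a_{bu}, a_u\rangle = \langle M_b g_b, f\rangle$ or similar, where the inner average over $u$ collapses because the $u$-dependence cancels, leaving an average of the form $\frac1{|F_N|}\sum_{u\in F_N}\langle (\text{something independent of }u \text{ or depending on }u\text{ only through }M_u)f, f\rangle$, to which I apply von Neumann's mean ergodic theorem (Theorem \ref{lema_vNmet}) for the additive group $S_A$ and/or the multiplicative group $S_M$.

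The hard part — and the crux of the argument — is carrying out the affine-group bookkeeping so that the double limit in $\limsup_N \bigl|\frac1{|F_N|}\sum_{u\in F_N}\langle a_{bu}, a_u\rangle\bigr|$ evaluates, via Theorem \ref{lema_vNmet}, to an expression involving $P_A$ or $P_M$ applied to $f$, and then seeing that this vanishes for all $b$ in a co-finite set precisely because $P_AP_Mf = 0$. Concretely I expect the inner product to reduce to something like $\langle M_{b-1} P_A f,\ M_{\text{(stuff)}} f\rangle$ after the mean ergodic theorem is applied in the $u$-average, and then averaging (or taking a further limit) in a way that produces $P_M P_A f = 0$; the co-finiteness exclusion (a single bad $b$, e.g. $b=1$) comes from the ``at most one solution to $g_1 x = g_2 x$'' phenomenon already exploited in Proposition \ref{prop_folner}, or from needing $b \neq 1$ so that $M_b$ has no nonzero fixed vectors relevant to the cancellation. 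Once $\limsup_N|\cdots| = 0$ for all such $b$, Proposition \ref{lema_vdc} yields $\frac1{|F_N|}\sum_{u\in F_N} a_u \to 0$, completing the proof; the existence of the limit is then automatic.
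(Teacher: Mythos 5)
Your overall strategy (van der Corput plus the mean ergodic theorem) is the right one, but the decomposition you start with creates a genuine gap. You subtract $Pf$ and reduce to the case $Pf=0$, i.e.\ $P_MP_Af=0$. The trouble is that the van der Corput correlation, once the bookkeeping is actually carried out, evaluates to
$$\lim_{N\to\infty}\frac1{|F_N|}\sum_{u\in F_N}\langle a_{ub},a_u\rangle=\langle P_Af,\,M_{1/b}f\rangle,$$
because $\langle a_{ub},a_u\rangle=\langle A_{-u(b-1/b)}f,M_{1/b}f\rangle$ by (\ref{eq_affine}) and unitarity, and the $u$-average of $A_{-u(b-1/b)}f$ converges to $P_Af$ by Theorem \ref{lema_vNmet} together with Lemma \ref{lemma_folner} (this needs $b\neq\pm1$, not just $b\neq1$, so that $b-1/b\neq0$). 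This limit vanishes when $P_Af=0$, but \emph{not} in general when merely $Pf=0$: take $f$ additively invariant with $P_Mf=0$; then $Pf=P_MP_Af=P_Mf=0$, yet the correlation equals $\langle f,M_{1/b}f\rangle$, which has no reason to vanish. So with your reduction the hypothesis of Proposition \ref{lema_vdc} simply fails, and the argument cannot close.

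The correct splitting is $f=P_Af+(f-P_Af)$. On the second summand $P_A$ vanishes and the van der Corput argument above gives limit $0$. On the first summand $A_{-u}$ acts trivially, so $M_uA_{-u}P_Af=M_uP_Af$, and the multiplicative mean ergodic theorem gives limit $P_MP_Af=Pf$; this is the only place the projection $P$ (and the commutation $P=P_MP_A$ from Lemma \ref{lema_commute}) actually enters. A secondary issue: you leave the central computation (``the affine-group bookkeeping'') as an expectation rather than carrying it out, and it is precisely that computation which reveals that the relevant projection in the correlation limit is $P_A$ rather than $P$ --- which is why the decomposition matters.
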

\begin{proof}
We assume first that $P_Af=0$.
For $u\in K^*$, let $a_u=M_uA_{-u}f$.
Then for each $b\in K^*$ we have
\begin{eqnarray*}\langle a_{ub},a_u\rangle&=&\langle M_{ub}A_{-ub}f,M_uA_{-u}f\rangle\\ &=&\langle M_bA_{-ub}f,A_{-u}f\rangle\\&=&\langle A_{-ub}f,M_{1/b}A_{-u}f\rangle\\&=&\langle A_{-ub}f,A_{-u/b}M_{1/b}f\rangle\\&=&\langle A_{-u(b-1/b)}f,M_{1/b}f\rangle\end{eqnarray*}
where we used equation (\ref{eq_affine}) and the fact that the operators are unitary.
Now if $b\neq\pm1$ then $b-\frac1b=\frac{b^2-1}b\neq0$ and so the sequence of sets $\left(-\frac {b^2-1}bF_N\right)_N$ is again a double F\o lner sequence on $K$, by Lemma \ref{lemma_folner}.
Thus, applying Theorem \ref{lema_vNmet} we get (keeping $b\neq\pm1$ fixed)
\begin{eqnarray*}\lim_{N\to\infty}\frac1{|F_N|}\sum_{u\in F_N}\langle a_{ub},a_u\rangle&=& \left\langle\lim_{N\to\infty}\frac1{|F_N|}\sum_{u\in F_N}A_{-u(b-1/b)}f,M_{1/b}f\right\rangle\\&=& \left\langle\lim_{N\to\infty}\frac1{|F_N|}\sum_{u\in -\frac {b^2-1}bF_N}A_uf,M_{1/b}f\right\rangle\\&=&\langle P_Af,M_{1/b}f\rangle=0\end{eqnarray*}

Thus it follows from Proposition \ref{lema_vdc} that
$$\lim_{N\to\infty}\frac1{|F_N|}\sum_{u\in F_N}M_uA_{-u}f=0$$

Now, for a general $f\in H$, we can write $f=f_1+f_2$, where $f_1=P_Af$ and $f_2=f-P_Af$ satisfies $P_Af_2=0$.
Note that $f_1$ is invariant under $A_u$. Therefore
\begin{eqnarray*}
\lim_{N\to\infty}\frac1{|F_N|}\sum_{u\in F_N}M_uA_{-u}f&=&\lim_{N\to\infty}\frac1{|F_N|}\sum_{u\in F_N}M_uA_{-u}f_1\\&=&\lim_{N\to\infty}\frac1{|F_N|}\sum_{u\in F_N}M_uf_1\\&=&P_Mf_1=P_MP_Af=Pf
\end{eqnarray*}
\end{proof}
\begin{remark}
Lemma \ref{lema_bulk} can be interpreted as an ergodic theorem along a sparse subset of $\AK$ (namely the subset $\{M_uA_{-u}:u\in K^*\}$).\end{remark}

\begin{proof}[Proof of Theorem \ref{teo_doublergodic}]
Let $B\in{\mathcal B}$.
By Lemma \ref{lema_bulk} applied to the characteristic function $1_B$ of $B$ we get that
\begin{eqnarray*}
\lim_{N\to\infty}\frac1{|F_N|}\sum_{u\in F_N}\mu(A_{-u}B\cap M_{1/u}B)&=&\lim_{N\to\infty}\frac1{|F_N|}\sum_{u\in F_N}\int_\Omega A_{-u}1_B M_{1/u}1_Bd\mu\\&=&\lim_{N\to\infty}\frac1{|F_N|}\sum_{u\in F_N}\int_\Omega (M_uA_{-u}1_B)1_Bd\mu\\&=&\int_\Omega (P1_B)1_Bd\mu
\end{eqnarray*}
We can use the Cauchy-Schwartz inequality with the functions $P1_B$ and the constant function $1$, and the trivial observation that $P1=1$, to get
$$\int_\Omega (P1_B)1_Bd\mu=\|P1_B\|^2\geq\langle P1_B,1\rangle^2=\langle 1_B,P1\rangle^2=\langle1_B,1\rangle^2=\mu(B)^2$$
Putting everything together we obtain
$$\lim_{N\to\infty}\frac1{|F_N|}\sum_{u\in F_N}\mu(A_{-u}B\cap M_{1/u}B)\geq\mu(B)^2$$
\end{proof}

\begin{proof}[Proof of Theorem \ref{thm_supergodic}]
Let $B,C\in{\mathcal B}$.
By Lemma \ref{lema_bulk} applied to the characteristic function $1_B$ of $B$ we get that
\begin{eqnarray*}
\lim_{N\to\infty}\frac1{|F_N|}\sum_{u\in F_N}\mu(A_{-u}B\cap M_{1/u}C)&=&\lim_{N\to\infty}\frac1{|F_N|}\sum_{u\in F_N}\int_\Omega A_{-u}1_B M_{1/u}1_Cd\mu\\&=&\lim_{N\to\infty}\frac1{|F_N|}\sum_{u\in F_N}\int_\Omega (M_uA_{-u}1_B)1_Cd\mu\\&=&\int_\Omega (P1_B)1_Cd\mu
\end{eqnarray*}
Since the action of $\AK$ is ergodic, $P1_B=\mu(B)$, and hence
$$\lim_{N\to\infty}\frac1{|F_N|}\sum_{u\in F_N}\mu(A_{-u}B\cap M_{1/u}C)=\mu(B)\int_\Omega1_Cd\mu=\mu(B)\mu(C)$$

\end{proof}

\begin{proof}[Proof of Corollary \ref{cor_estimates}]
Let $B,C\subset{\mathcal B}$.
Note that trivially $\mu(A_{-u}B\cap M_{1/u}B)\leq\mu(M_{1/u}B)=\mu(B)$.
For each $\epsilon>0$ let $D_\epsilon$ be the set $D_\epsilon:=\{u\in K:\mu(A_{-u}B\cap M_{1/u}B)>\mu(B)^2-\epsilon\}$.

Now let $(\tilde F_N)_{N\in\N}$ be a subsequence of $(F_N)_{N\in\N}$ such that
$$\underline d_{(F_N)}(D_\eps)=\lim_{N\to\infty}\frac{|D_\eps\cap\tilde F_N|}{|\tilde F_N|}$$
Thus $\underline d_{(F_N)}(D_\eps)=\underline d_{(\tilde F_N)}(D_\eps)=\bar d_{(\tilde F_N)}(D_\eps)$.
By Theorem \ref{teo_doublergodic}, we now have
\begin{eqnarray*}
  \mu(B)^2&=&\lim_{N\to\infty}\frac1{|F_N|}\sum_{u\in F_N}\mu(A_{-u}B\cap M_{1/u}B)\\&=&\lim_{N\to\infty}\frac1{|\tilde F_N|}\sum_{u\in\tilde F_N}\mu(A_{-u}B\cap M_{1/u}B)\\&=&\lim_{N\to\infty}\frac1{|\tilde F_N|}\left(\sum_{u\in\tilde F_N\cap D_\eps}\mu(A_{-u}B\cap M_{1/u}B)+\sum_{u\in\tilde F_N\setminus D_\eps}\mu(A_{-u}B\cap M_{1/u}B)\right)\\&\leq&\mu(B)\bar d_{(\tilde F_N)}(D_\eps)+\big(\mu(B)^2-\eps\big)\big(1-\underline d_{(\tilde F_N)}(D_\eps)\big)
\\&=&\mu(B)\underline d_{(F_N)}(D_\eps)+\big(\mu(B)^2-\eps\big)\big(1-\underline d_{(F_N)}(D_\eps)\big)
\end{eqnarray*}
From this we conclude that $\underline d_{(F_N)}(D_\eps)\geq\eps/\Big(\eps+\mu(B)\big(1-\mu(B)\big)\Big)$.

Now assume that the action of $\AK$ is ergodic.
Note that trivially $\mu(A_{-u}B\cap M_{1/u}C)\leq\mu(M_{1/u}C)=\mu(C)$.
For each $\epsilon>0$ let $D_\epsilon$ be the set $D_\epsilon:=\{u\in K:\mu(A_{-u}B\cap M_{1/u}C)>\mu(B)\mu(C)-\epsilon\}$.

Now let $(\tilde F_N)_{N\in\N}$ be a subsequence of $(F_N)_{N\in\N}$ such that
$$\underline d_{(F_N)}(D_\eps)=\lim_{N\to\infty}\frac{|D_\eps\cap\tilde F_N|}{|\tilde F_N|}$$
Thus $\underline d_{(F_N)}(D_\eps)=\underline d_{(\tilde F_N)}(D_\eps)=\bar d_{(\tilde F_N)}(D_\eps)$.
By Theorem \ref{thm_supergodic} we now have
\begin{eqnarray*}
  \mu(B)\mu(C)&=&\lim_{N\to\infty}\frac1{|F_N|}\sum_{u\in F_N}\mu(A_{-u}B\cap M_{1/u}C)\\&=&\lim_{N\to\infty}\frac1{|\tilde F_N|}\sum_{u\in\tilde F_N}\mu(A_{-u}B\cap M_{1/u}C)\\&=&\lim_{N\to\infty}\frac1{|\tilde F_N|}\left(\sum_{u\in\tilde F_N\cap D_\eps}\mu(A_{-u}B\cap M_{1/u}C)+\sum_{u\in\tilde F_N\setminus D_\eps}\mu(A_{-u}B\cap M_{1/u}C)\right)\\&\leq&\mu(C)\bar d_{(\tilde F_N)}(D_\eps)+\big(\mu(B)\mu(C)-\eps\big)\big(1-\underline d_{(\tilde F_N)}(D_\eps)\big)\\&=&\mu(C)\underline d_{(F_N)}(D_\eps)+\big(\mu(B)\mu(C)-\eps\big)\big(1-\underline d_{(F_N)}(D_\eps)\big)
\end{eqnarray*}
From this we conclude that $\underline d_{(F_N)}(D_\eps)\geq\eps/\Big(\eps+\mu(C)\big(1-\mu(B)\big)\Big)$. Switching the roles of $B$ and $C$ we obtain Equation (\ref{eq_BC}).
\end{proof}
\begin{remark}
Note that the lower bound on $\underline d_{(F_N)}(D_\eps)$ does not depend on the set $B$, only on the measure $\mu(B)$.
Moreover, it does not depend on the double F\o lner sequence $(F_N)$.
\end{remark}

\section{Proof of Theorem \ref{teo_main}}\label{section}
In this section we give a proof of Theorem \ref{teo_main}. We start by giving a more precise statement:
\begin{theorem}\label{thm_fullcolor}
For any finite coloring $K=\bigcup C_i$ there exists a color $C_i$, a subset $D\subset K$ satisfying $\bar d_{(F_N)}(D)>0$ and, for each $u\in D$, there is a set $D_u\subset K$ also satisfying $\bar d_{(F_N)}(D_u)>0$ such that for any $v\in D_u$ we have $\{u,u+v,uv\}\subset C_i$.
\end{theorem}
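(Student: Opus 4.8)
The plan is to bootstrap the two-element result (Theorem \ref{teo_double}) into the three-element result by a standard pigeonhole-plus-iteration argument, using the fact that the colors partition $K$ and that the densities $\bar d_{(F_N)}$ and $\underline d_{(F_N)}$ are affine invariant. First I would fix a finite coloring $K = \bigcup_{i=1}^r C_i$. Since the $F_N$ are finite and partition-density is finitely additive in the upper sense, at least one color class, say $C_{i_0}$, has $\bar d_{(F_N)}(C_{i_0}) =: \delta > 0$; actually I would want to be slightly more careful and work with a color whose \emph{lower} density relative to a suitable subsequence is positive, or simply pass to a subsequence $(F_{N_k})$ along which $|C_{i_0}\cap F_{N_k}|/|F_{N_k}| \to \delta$, and then note that $(F_{N_k})$ is still a double F\o lner sequence, so we may as well assume $\bar d_{(F_N)}(C_{i_0}) = \underline d_{(F_N)}(C_{i_0}) = \delta > 0$ after relabelling.

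Next, apply Theorem \ref{teo_double} to $E = C_{i_0}$ with a small $\eps > 0$ (to be chosen, e.g. $\eps = \delta^2/2$). This produces a set $D \subset K^*$ with $\underline d_{(F_N)}(D) \geq \eps/(\eps + \delta - \delta^2) > 0$ such that for every $u \in D$,
$$\bar d_{(F_N)}\big((C_{i_0} - u) \cap (C_{i_0}/u)\big) > \delta^2 - \eps > 0.$$
Now I would restrict attention to those $u \in D$ that additionally lie in $C_{i_0}$. The point: if $u \in C_{i_0}$, and we can find $y$ with $y + u \in C_{i_0}$ and $yu \in C_{i_0}$, then $\{u, y+u, yu\} \subset C_{i_0}$ as desired. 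So set $D' := D \cap C_{i_0}$. The issue is whether $D'$ still has positive density, which requires combining the lower density bound on $D$ with the density of $C_{i_0}$ — and here one must be mildly cautious because positive lower density of $D$ and positive upper density of $C_{i_0}$ do not automatically intersect with positive density. I would handle this by choosing $\eps$ small enough that $\underline d_{(F_N)}(D) > 1 - \delta$ is \emph{not} generally achievable, so instead I would argue along a subsequence realizing $\bar d_{(F_N)}(C_{i_0}) = \delta$: along such a subsequence $|C_{i_0}\cap F_N|/|F_N| \to \delta$ while $\liminf |D \cap F_N|/|F_N| \geq \eps/(\eps+\delta-\delta^2)$, and one checks $\limsup |D\cap C_{i_0}\cap F_N|/|F_N| \geq \eps/(\eps+\delta-\delta^2) + \delta - 1$ only if that is positive — which it need not be. The cleaner route, which I would actually adopt, is to not intersect with $C_{i_0}$ directly but instead iterate the coloring argument: there are only $r$ colors, so by pigeonhole a positive-upper-density subset $D''$ of $D$ has all its elements $u$ in a single color $C_j$; if $j = i_0$ we are done as above, and if $j \neq i_0$ we recolor and recurse. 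This will terminate, but making "recurse" precise is the delicate point.

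The genuinely robust approach — and the one I expect the paper takes — is to interleave Theorem \ref{teo_double} with the pigeonhole on colors in a single pass. Concretely: for $u$ ranging over the positive-density set $D$, define $E_u := (C_{i_0} - u) \cap (C_{i_0}/u)$, which has $\bar d_{(F_N)}(E_u) > \delta^2 - \eps > 0$ uniformly in $u$. A set $E_u$ contains an element $y$ iff there is $y$ with $y + u, yu \in C_{i_0}$. So for each $u \in D$ simply pick any $y_u \in E_u$ — this already gives $\{y_u + u, y_u u\} \subset C_{i_0}$ for all $u$ in a positive-density set. To upgrade to $\{u, y+u, yu\}$ all in one color, I would instead intersect: since every $E_u$ has upper density $> \delta^2 - \eps$, and $D$ has lower density $> 0$, I want a single color containing $u$ \emph{together with} $E_u$-type elements. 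Here is the resolution: apply the pigeonhole to the set $D \subseteq K = \bigcup C_i$ to extract $D_{i_1} := D \cap C_{i_1}$ with $\bar d_{(F_N)}(D_{i_1}) > 0$ for some $i_1$; set $D := D_{i_1}$ in the theorem's notation, and for each $u \in D$ set $D_u := (C_{i_1} - u) \cap (C_{i_1}/u)$ if $i_1 = i_0$. If $i_1 \neq i_0$ one repeats the whole construction with $C_{i_1}$ in place of $C_{i_0}$; since at each stage the relevant density bound $\eps/(\eps + \delta_j - \delta_j^2)$ stays bounded below (the $\delta_j$ live in a compact subset of $(0,1]$ once we fix $\eps$ relative to the minimum of the positive color-densities) and there are finitely many colors, a fixed-point / pigeonhole argument over the $r$ colors forces the process to close up on some color $C_i$ with the desired $D$ and $\{D_u\}_{u \in D}$. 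The main obstacle, and the step requiring the most care, is precisely this matching of "$u$ has color $i$" with "$E_u$ has positive density in color $i$" simultaneously for a positive-density set of $u$: the naive intersection of a positive-lower-density set with a positive-upper-density set can be empty, so one must exploit the coloring structure (only finitely many classes) rather than abstract density intersection, and verify that the quantitative lower bounds from Theorem \ref{teo_double} survive the finitely many iterations.
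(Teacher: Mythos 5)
You have correctly isolated the crux of the problem: Theorem \ref{teo_double} applied to a color $C_{i_0}$ produces a positive-lower-density set $D$ of good differences $u$, but nothing forces any of these $u$ to lie in $C_{i_0}$ itself, and intersecting a positive-lower-density set with a positive-upper-density set can give the empty set. Unfortunately, the resolution you propose --- pigeonhole $D$ into a color $C_{i_1}$ and recurse with $C_{i_1}$ in place of $C_{i_0}$ --- does not close. The recursion generates a sequence of colors $i_0, i_1, i_2,\dots$ taking values in a finite set, so some color must repeat, but what you need is $i_{j+1}=i_j$ at a \emph{single} stage, and a cycle such as $1,2,1,2,\dots$ satisfies neither this nor anything you can extract a monochromatic triple from: knowing that the good differences for $C_1$ sit densely inside $C_2$ and vice versa never produces a $u\in C_i$ whose set $(C_i-u)\cap(C_i/u)$ has positive density for the \emph{same} $i$. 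The appeal to "a fixed-point / pigeonhole argument over the $r$ colors" is precisely the step that is missing, and no amount of tracking the quantitative bounds $\eps/(\eps+\delta_j-\delta_j^2)$ repairs it. This is a genuine gap, not a presentational one.

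The idea that actually overcomes the obstacle is the \emph{partition regularity of sets of recurrence} (Lemma \ref{lemma_recurrence}), proved by passing to a product of the putative counterexample systems. One sets $R_i=\{M_uA_{-u}:u\in C_i\}$ and $R=R_1\cup\dots\cup R_r$ (over the colors of positive upper density); Theorem \ref{teo_doublergodic} shows $R$ is a set of recurrence for actions of $\AK$, hence some single $R_i$ is already a set of recurrence. Feeding $R_i$ the system obtained from the correspondence principle applied to $E=C_i$ \emph{itself} then yields some $u\in C_i$ with $\bar d_{(F_N)}\big((C_i-u)\cap(C_i/u)\big)>0$: the matching of "$u$ has color $i$" with "$(C_i-u)\cap(C_i/u)$ is large" is automatic because the recurrence times of $R_i$ are by construction indexed by $C_i$. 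Note also that this argument a priori only gives $D(C_i)\neq\emptyset$ rather than $\bar d_{(F_N)}(D(C_i))>0$; the upgrade is done by a separate reduction (refine the coloring by splitting $C_i$ into $D(C_i)$ and $C_i\setminus D(C_i)$ whenever $D(C_i)$ is nonempty of zero density, and observe the refined cell has empty $D$), a step your proposal would also need. I would encourage you to work out the product-system proof of Lemma \ref{lemma_recurrence}; it is short and it is exactly the tool your argument lacks.
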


\begin{definition}
Let $G$ be a group.
A set $R\subset G$ is a \emph{set of recurrence} if for all probability preserving actions $(\Omega,\mu,(T_g)_{g\in G})$ and every measurable set $B\subset \Omega$ with positive measure, there exists some non-identity $g\in R$ such that $\mu(B\cap T_gB)>0$.
\end{definition}
The proof of Theorem \ref{thm_fullcolor} uses the fact that sets of recurrence are partition regular.
For other similar applications of this phenomenon see for instance \cite{Bergelson86}, the discussion before Question 11 in \cite{Bergelson96} and Theorem 0.4 in \cite{Bergelson_McCutcheon96}.

The following lemma is well known; we include the proof for the convenience of the reader.
\begin{lemma}\label{lemma_recurrence}
Let $G$ be a group and let $R\subset G$ be a set of recurrence. Then for every finite partition $R=R_1\cup\cdots\cup R_r$, one of the sets $R_i$ is also a set of recurrence.
\end{lemma}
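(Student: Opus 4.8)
The plan is to prove the contrapositive: assuming none of the $R_i$ is a set of recurrence, I will build a single measure preserving system and a positive measure set witnessing that $R$ itself is not a set of recurrence. First I would invoke the hypothesis: for each $i \in \{1,\dots,r\}$ there is a probability preserving action $(\Omega_i,\mu_i,(T^{(i)}_g)_{g\in G})$ and a set $B_i \subset \Omega_i$ with $\mu_i(B_i)>0$ such that $\mu_i(B_i \cap T^{(i)}_g B_i)=0$ for every non-identity $g \in R_i$.

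Next I would form the product system $(\Omega,\mu,(T_g)_{g\in G})$ where $\Omega = \Omega_1 \times \cdots \times \Omega_r$, $\mu = \mu_1 \otimes \cdots \otimes \mu_r$, and $T_g = T^{(1)}_g \times \cdots \times T^{(r)}_g$ acts diagonally; this is again a measure preserving $G$-action. Take $B = B_1 \times \cdots \times B_r$, so that $\mu(B) = \prod_i \mu_i(B_i) > 0$. For any non-identity $g \in R$, $g$ lies in some $R_i$, and then $\mu(B \cap T_g B) \le \mu_i(B_i \cap T^{(i)}_g B_i) = 0$ (projecting onto the $i$-th coordinate, or simply noting $B\cap T_gB \subset \Omega_1\times\cdots\times(B_i\cap T^{(i)}_gB_i)\times\cdots\times\Omega_r$). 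Hence $\mu(B\cap T_gB)=0$ for all non-identity $g\in R$, so $R$ is not a set of recurrence, contradicting the hypothesis.

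There is essentially no hard step here; the only mild subtlety is the bookkeeping that the diagonal action on the product space is well defined and measure preserving (immediate, since each factor action is), and that the identity element is handled consistently — the definition of set of recurrence only quantifies over non-identity $g$, and a partition of $R$ into finitely many pieces sends each non-identity element to exactly one $R_i$, so no element is lost. I expect the main (and only) point requiring a line of care is the inclusion $B \cap T_g B \subset \Omega_1 \times \cdots \times (B_i \cap T^{(i)}_g B_i) \times \cdots \times \Omega_r$ together with monotonicity of $\mu$, which yields the vanishing intersection.
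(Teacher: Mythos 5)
Your proposal is correct and follows essentially the same route as the paper: take the product of the $r$ counterexample systems with the product set $B=B_1\times\cdots\times B_r$, and observe that the intersection $B\cap T_gB$ vanishes (the paper phrases this via the identity $\mu(B\cap T_gB)=\prod_i\mu_i(B_i\cap T^{(i)}_gB_i)$ and derives a contradiction, while you bound it directly by one factor, which is the same computation). No gaps.
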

\begin{proof}
The proof goes by contradiction. Assume that none of the sets $R_1,\dots,R_r$ is a set of recurrence. Then for each $i=1,\dots,r$ there is some probability preserving action $(\Omega_i,\mu_i,(T_g)_{g\in G}^{(i)})$ and a set $B_i\subset \Omega_i$ with $\mu_i(B_i)>0$ and such that $\mu_i(B_i\cap T_g^{(i)}B_i)=0$ for all $g\in R_i$.

Let $\Omega=\Omega_1\times\cdots\times\Omega_r$, let $\mu=\mu_1\otimes\cdots\otimes\mu_r$, let $B=B_1\times\dots\times B_r$ and, for each $g\in G$, let $T_g(\omega_1,\dots,\omega_r)=(T_g^{(1)}\omega_1,\dots,T_g^{(r)}\omega_r)$.
Then $(T_g)_{g\in G}$ is a probability preserving action of $G$ on $\Omega$ and $\mu(B)=\mu_1(B_1)\cdots\mu_r(B_r)>0$.

Since $R$ is a set of recurrence, there exists some $g\in R$ such that $\mu(B\cap T_gB)>0$.
Since $\mu(B\cap T_gB)=\prod_{i=1}^r\mu_i(B_i\cap T_gB_i)$ we conclude that $\mu_i(B_i\cap T_gB_i)>0$ for all $i=1,\dots,r$. But this implies that $g\notin R_i$ for all $i=1,\dots,r$, which contradicts the fact that $g\in R=R_1\cup\dots\cup R_r$.
\end{proof}
\begin{proof}[Proof of Theorem \ref{thm_fullcolor}]

Let $K=C_1\cup C_2\cup...\cup C_{r'}$ be a finite partition of $K$.
Assume without loss of generality that, for some $r\leq r'$, the upper density $\bar d_{(F_N)}(C_i)$ is positive for $i=1,...,r$ and $\bar d_{(F_N)}(C_i)=0$ for $i=r+1,...,r'$.

For a set $C\subset K$ and each $u\in C$ define the set $D_u(C)=(C-u)\cap (C/u)$. Let $D(C)=\left\{u\in C:\bar d_{(F_N)}\big(D_u(C)\big)>0\right\}$. We want to show that for some $i=1,...,r$ we have $\bar d_{(F_N)}\big(D(C_i)\big)>0$.

If for some $1\leq i\leq r$ we have $\bar d_{(F_N)}\big(D(C_i)\big)=0$ but $D(C_i)\neq\emptyset$, we can consider the more refined coloring obtained by distinguish $D(C_i)$ and $C_i\setminus D(C_i)$.
Since
$$D\big(C_i\setminus D(C_i)\big)\subset \big(C_i\setminus D(C_i)\big)\cap D(C_i)$$
we conclude that $D\big(C_i\setminus D(C_i)\big)=\emptyset$.
Thus, without loss of generality, we can assume that either $D(C_i)=\emptyset$ or $\bar d_{(F_N)}\big(D(C_i)\big)>0$. Therefore it suffices to show that for some $i=1,...,r$ we have $D(C_i)\neq\emptyset$.

For each $i=1,...,r$ let $R_i=\{M_uA_{-u}:u\in C_i\}\subset\AK$ and let $R=R_1\cup...\cup R_r$.
We claim that $R$ is a set of recurrence.
Indeed, given any probability preserving action $(\Omega,\mu,(T_g)_{g\in\AK})$ of $\AK$ and any measurable set $B\subset\Omega$ with positive measure, by Theorem \ref{teo_doublergodic} we find that the set $\{u\in K^*:\mu(A_{-u}B\cap M_{1/u}B)>0\}$ has positive upper density. In particular, for some $u\in C_1\cup...\cup C_r$ we have that $\mu(M_uA_{-u}B\cap B)=\mu(A_{-u}B\cap M_{1/u}B)>0$.
Since $M_uA_{-u}\in R$ we conclude that $R$ is a set of recurrence.

By Lemma \ref{lemma_recurrence} we conclude that for some $i=1,...,r$ the set $R_i$ is a set of recurrence.
We claim that $D(C_i)\neq\emptyset$.

To see this, apply the correspondence principle (Theorem \ref{prop_correspondence}) with $X=K$, $G=\AK$, $G_N=F_N$ and $E=C_i$ to find a probability preserving action $(T_g)_{g\in\AK}$ of $\AK$ on some probability space $(\Omega,\mu)$ and a measurable set $B\subset \Omega$ satisfying $\mu(B)=\bar d_{(F_N)}(C_i)$ and
$$\bar d_{(F_N)}\left(A_{-u}C_i\cap M_{1/u}C_i\right)\geq\mu\left(T_{A_{-u}}B\cap T_{M_{1/u}}B\right)$$
for all $u\in K^*$.
Since $R_i$ is a set of recurrence, there is some $u\in C_i$ such that
\begin{eqnarray*}
0&<&\mu(T_{M_uA_{-u}}B\cap B)=\mu\left(T_{A_{-u}}B\cap T_{M_{1/u}}B\right)\\&\leq&\bar d_{(F_N)}\left(A_{-u}C_i\cap M_{1/u}C_i\right)=\bar d_{(F_N)}\big(D_u(C_i)\big)
\end{eqnarray*}
We conclude that $u\in D(C_i)$, hence $\bar d_{(F_N)}(D^i)>0$.

Let $D=D(C_i)\subset C_i$ and for each $u\in D$ let $D_u=D_u(C_i)$.
Now let $v\in D_u$.
Then we have $u+v\in C_i$ and $uv\in C_i$.
We conclude that $\{u,u+v,uv\}\subset C_i$ as desired.
\end{proof}

\section{Finite Fields}\label{section_finite}
The main result of this section is an analog of Theorem \ref{teo_doublergodic} for finite fields.

For a finite field $F$, let $F^*=F\setminus\{0\}$ be the multiplicative subgroup.
The group of affine transformations, which we denote by $\AF$, is the group of maps of the form $x\mapsto ux+v$ where $u\in F^*$ and $v\in F$.
Again we will use the notation $A_u\in \AF$ to denote the map $x\mapsto x+u$ and $M_u\in \AF$ to denote the map $x\mapsto ux$, and we will use the subgroups $S_A$ and $S_M$ as defined in Definition \ref{def_subgroups}.
The next result is an analogue of Theorem \ref{teo_doublergodic} for finite fields:
\begin{theorem}\label{teo_finitergodic}
Let $F$ be a finite field and assume that the affine group $\AF$ acts by measure preserving transformations on a probability space $(\Omega,{\mathcal B},\mu)$.
Then for each $B\in{\mathcal B}$ such that $\mu(B)>\sqrt{6/|F|}$ there exists $u\in F^*$ such that $\mu(B\cap M_uA_{-u}B)>0$.

Moreover, if the action of $\AF$ on $\Omega$ is ergodic (this is the case, for instance, when $\Omega=F$) and if $B,C\in{\mathcal B}$ are such that $\mu(B)\mu(C)>6/|F^*|$, then there exists $u\in F^*$ such that $\mu(B\cap M_uA_{-u}C)>0$.
\end{theorem}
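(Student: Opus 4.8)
The plan is to run a finite-field analogue of the argument behind Lemma \ref{lema_bulk}, quantifying every step instead of passing to limits. Set $H=L^2(\Omega,\mu)$, let $(U_g)$ be the Koopman representation of $\AF$, and abbreviate $A_uf=U_{A_u}f$, $M_uf=U_{M_u}f$. The key quantity is the averaged operator
$$\mathcal{M}f:=\frac1{|F^*|}\sum_{u\in F^*}M_uA_{-u}f,$$
and the goal is to show $\langle\mathcal{M}1_B,1_B\rangle>0$ (respectively $\langle\mathcal{M}1_B,1_C\rangle>0$), since $\langle M_uA_{-u}1_B,1_B\rangle=\mu(B\cap M_uA_{-u}B)\ge0$ for each $u$ and a positive average forces a positive summand. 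First I would split $f=P_Af+f_0$ with $P_Af_0=0$, where $P_A$ is the projection onto $S_A$-invariant vectors. On the $P_Af$ part, $A_{-u}$ acts trivially, so $\frac1{|F^*|}\sum_u M_uP_Af=P_M P_A f+(\text{error})$; here the error is \emph{not} zero as in the infinite case, because averaging over $F^*$ rather than over all of $F$ (or over a F\o lner sequence) only approximates $P_M$. I would estimate this error via the mean ergodic theorem for the \emph{finite} group $S_M\cong F^*$: $\frac1{|F^*|}\sum_{u\in F^*}M_u$ is exactly the projection onto $S_M$-invariants, so in fact $\frac1{|F^*|}\sum_{u\in F^*}M_uP_Af=P_MP_Af=Pf$ with no error at all — the only genuine error will come from the $f_0$ part.

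For the $f_0$ part, I would reproduce the van der Corput computation from Lemma \ref{lema_bulk} at the level of a finite sum. Writing $a_u=M_uA_{-u}f_0$ and expanding,
$$\Big\|\frac1{|F^*|}\sum_{u\in F^*}a_u\Big\|^2=\frac1{|F^*|^2}\sum_{u,v\in F^*}\langle a_u,a_v\rangle=\frac1{|F^*|^2}\sum_{b\in F^*}\sum_{u\in F^*}\langle a_{bu},a_u\rangle,$$
and the identity $\langle a_{bu},a_u\rangle=\langle A_{-u(b-1/b)}f_0,M_{1/b}f_0\rangle$ (from \eqref{eq_affine} and unitarity) holds verbatim. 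For $b\ne\pm1$ the map $u\mapsto u(b-1/b)$ is a bijection of $F^*$, so $\frac1{|F^*|}\sum_{u\in F^*}A_{-u(b-1/b)}f_0=\frac1{|F^*|}\sum_{w\in F^*}A_{-w}f_0$, which again equals a genuine projection — specifically the average of $A_{-w}$ over $F^*$, not over all of $F$, so it differs from $P_Af_0=0$ only by the single missing term $w=0$, i.e. by $-\frac1{|F^*|}f_0$. Hence each inner average over $u$ has size $O(\|f_0\|^2/|F^*|)$, and the troublesome values $b\in\{1,-1\}$ contribute at most $2\|f_0\|^2/|F^*|$ more. Collecting these contributions yields a bound of the form $\|\mathcal{M}f_0\|\le c\,\|f_0\|/\sqrt{|F|}$ for an explicit small constant; tracking the constant carefully through (at most three or four cheap bounds) is exactly what produces the numerical factor $6$.

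Combining the two parts, $\mathcal{M}1_B=P1_B+R$ with $\|R\|\le\sqrt{6/|F|}\,\|1_B\|\le\sqrt{6/|F|}$, so
$$\langle\mathcal{M}1_B,1_B\rangle\ge\langle P1_B,1_B\rangle-\|R\|\,\mu(B)^{1/2}\ge\mu(B)^2-\sqrt{6/|F|}\,\mu(B),$$
using $\langle P1_B,1_B\rangle=\|P1_B\|^2\ge\langle P1_B,1\rangle^2=\mu(B)^2$ exactly as in the proof of Theorem \ref{teo_doublergodic}. When $\mu(B)>\sqrt{6/|F|}$ this is positive, giving the first claim. For the ergodic case $P1_B=\mu(B)$, so $\langle\mathcal{M}1_B,1_C\rangle\ge\mu(B)\mu(C)-\sqrt{6/|F|}\,\mu(C)^{1/2}\mu(C)^{1/2}$; one has to be slightly more careful with where the factor lands, but rearranging gives positivity as soon as $\mu(B)\mu(C)>6/|F|$, and since in the ergodic case one may work over $F^*$ throughout, $6/|F^*|$ is the sharp form. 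That the action on $\Omega=F$ (with normalized counting measure and the tautological $\AF$-action) is ergodic follows because $\AF$ acts transitively on $F$. \textbf{The main obstacle} is bookkeeping: in the infinite setting every averaged operator was literally a projection, whereas here each one is a projection plus an $O(1/|F|)$ defect, and the whole point of the theorem is the explicit constant, so the work is in making sure every defect is controlled and the constants add up to no worse than $6$ — there is no conceptual difficulty beyond what is already in Lemma \ref{lema_bulk}.
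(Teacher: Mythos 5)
Your proposal is correct and follows essentially the same route as the paper's proof: split $1_B$ (resp.\ $1_C$) into its $S_A$-invariant part, which the average over $F^*$ sends exactly to $P_MP_A1_B$, plus a part $f_0$ with $P_Af_0=0$, and control $\bigl\|\sum_{u\in F^*}M_uA_{-u}f_0\bigr\|^2$ by the substitution $v=bu$ together with the identity $\langle a_{bu},a_u\rangle=\langle A_{-u(b-1/b)}f_0,M_{1/b}f_0\rangle$ from (\ref{eq_affine}), isolating $b=\pm1$ and using that $u\mapsto u(b-1/b)$ permutes $F^*$ otherwise, which is precisely the paper's computation. The one detail you leave implicit is that $P_A$ and $P_M$ still commute in the finite setting (needed so that $\langle P_MP_A1_B,1_B\rangle=\|P1_B\|^2\geq\mu(B)^2$ via Cauchy--Schwarz against the constant function), which the paper re-verifies with the same one-line calculation; your use of orthogonality to bound $\|f_0\|\leq\sqrt{\mu(B)}$ in place of the paper's cruder $\sqrt{2\mu(B)}$ would in fact yield a slightly better constant than $6$.
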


For an estimation on how many $u\in F^*$ satisfy Theorem \ref{teo_finitergodic}, see Corollary \ref{cor_finite} below.

The proof of Theorem \ref{teo_finitergodic}, is a ``finitization'' of the proof of Theorem \ref{teo_doublergodic}.
\begin{proof}
Let $H=L^2(\Omega,\mu)$.
We consider the Koopman representation $(U_g)_{g\in\AF}$ of $\AF$ on $H$ by defining $(U_gf)(x)=f(g^{-1}x)$
By an abuse of notation we will denote $U_{A_u}f$ by just $A_uf$ and $U_{M_u}f$ by just $M_uf$.
Let $P_A$ be the orthogonal projection onto the space of all functions invariant under the additive subgroup $S_A$, so that $\displaystyle P_Af(x)=\frac1{|F|}\sum_{u\in F}A_uf$, and let $P_M$ be the orthogonal projection onto the space of all functions invariant under the multiplicative subgroup $S_M$, so that $\displaystyle P_Mf(x)=\frac1{|F^*|}\sum_{u\in F^*}M_uf$.
We claim that $P_MP_Af=P_AP_Mf$, in analogy with Lemma \ref{lema_commute}.
Indeed, by Equation \ref{eq_affine} we have
$$P_MP_Af=\frac1{|F^*|}\sum_{u\in F^*}\frac1{|F|}\sum_{v\in F}M_uA_vf=\frac1{|F||F^*|}\sum_{u\in F^*}\sum_{v\in F}A_{uv}M_uf$$
Since, for each $u\in F^*$, we have $\{uv:v\in F\}=F$, we conclude:
$$P_MP_Af=\frac1{|F||F^*|}\sum_{u\in F^*}\sum_{v\in F}A_vM_uf=P_AP_Mf$$
proving the claim.
Let $B,C\in{\mathcal B}$ be such that $\mu(B)\mu(C)>6/|F^*|$ and let $f=1_C-P_A1_C$.
Note that $P_Af=0$ and $A_uP_A1_C=P_A1_C$.
Since we are in a finite setting now, we will need to bound error terms that are not $0$ (but asymptotically go to $0$ as $|F|$ increases to $\infty$).
For that we will need an estimation on the norm of $f$.

We have, for every $u\in F^*$, that $|A_u1_C-1_C|$ is the indicator function of a set with measure no larger than $2\mu(C)$. Hence $\left\|1_C-A_u1_C\right\|\leq \sqrt{2\mu(C)}$. Therefore
$$\|f\|=\left\|\frac1{|F|}\sum_{u\in F}1_C-A_u1_C\right\|\leq\frac1{|F|}\sum_{u\in F}\left\|1_C-A_u1_C\right\|\leq \sqrt{2\mu(C)}$$

We need to estimate the sum of the measures of the intersections $B\cap~ M_uA_{-u}C$ with $u$ running over all possible values in $F^*$:
\begin{equation}\label{eq_finite1}\sum_{u\in F^*}\mu(B\cap M_uA_{-u}C)=\sum_{u\in F^*}\langle1_B,M_uA_{-u}P_A1_C\rangle+\sum_{u\in F^*}\langle1_B,M_uA_{-u}f\rangle\end{equation}
By the Cauchy-Schwartz inequality we have that
\begin{equation}\label{eq_finite2}\left|\sum_{u\in F^*}\langle1_B,M_uA_{-u}f\rangle\right|=\left|\left\langle1_B,\sum_{u\in F^*}M_uA_{-u}f\right\rangle\right|\leq\sqrt{\mu(B)}\left\|\sum_{u\in F^*}M_uA_{-u}f\right\|\end{equation}
Using linearity of the inner product, the fact that the operators $A_u$ and $M_u$ are unitary, equation (\ref{eq_affine}) and the fact that $F^*$ is a multiplicative group (so that we can change the variables in the sums while still adding over the whole group) we get
\begin{eqnarray*}
\left\|\sum_{u\in F^*}M_uA_{-u}f\right\|^2&=&\sum_{u,d\in F^*}\langle M_uA_{-u}f,M_dA_{-d}f\rangle\\&=&\sum_{u,d\in F^*}\langle A_{d^2/u-u}f,M_{d/u}f\rangle\\&=&\sum_{u,d\in F^*}\langle A_{u(d^2-1)}f,M_df\rangle
\end{eqnarray*}
Now we separate the sum when $d=\pm1$ and note that when $d\neq\pm1$ we have
$$\sum_{u\in F^*}\langle A_{u(d^2-1)}f,M_df\rangle=\langle |F|P_Af-f,M_df\rangle=-\langle f,M_df\rangle$$
so putting this together we obtain:
$$\left\|\sum_{u\in F^*}M_uA_{-u}f\right\|^2=|F^*|(\langle f,M_1f+M_{-1}f\rangle)-\sum_{d\neq\pm1}\langle f,M_d f\rangle$$
Applying again the Cauchy-Schwartz inequality and using the bound $\|f\|\leq\sqrt{2\mu(C)}$, we get the estimate
\begin{equation}\label{eq_finite3}\left\|\sum_{u\in F^*}M_uA_{-u}f\right\|^2\leq3|F^*|\|f\|^2\leq6|F^*|\mu(C)\end{equation}

Combining this with (\ref{eq_finite1}) and (\ref{eq_finite2}) we have
$$\sum_{u\in F^*}\mu(B\cap M_uA_{-u}C)\geq\sum_{u\in F^*}\left\langle1_B,M_uP_A1_C\right\rangle-\sqrt{6|F^*|\mu(B)\mu(C)}$$
Normalizing we conclude that
\begin{equation}\label{eq_finitergodic}
\frac1{|F^*|}\sum_{u\in F^*}\mu(B\cap M_uA_{-u}C)\geq\langle 1_B,P_MP_A1_C\rangle-\sqrt{\frac{6\mu(B)\mu(C)}{|F^*|}}
\end{equation}
Note that $P_MP_A1_C=P_AP_M1_C$ is a function invariant under $\AF$.
Thus, if the action of $\AF$ is ergodic then $P_MP_A1_C=\mu(C)$ .
Therefore the right hand side of the previous inequality is $\mu(B)\mu(C)-\sqrt{6\mu(B)\mu(C)/|F^*|}$, so when  $\mu(B)\mu(C)>6/|F^*|$ it is positive and hence for some $n\in F^*$ we have $\mu(B\cap M_uA_{-u}C)>0$.

When $C=B$, and without assuming ergodicity, we have that $P_MP_A1_B=P_AP_M1_B$ is the projection of $1_B$ onto the subspace of invariant functions under the action of $\AF$.
Therefore
\begin{equation}\label{eq_finiteaverage}
\langle 1_B,P_MP_A1_B\rangle=\|P_MP_A1_B\|^2\geq\langle P_MP_A1_B,1\rangle^2=\langle 1_B,1\rangle^2=\mu(B)^2
\end{equation}
So if $\mu(B)>\sqrt{6/|F^*|}$, the average above is positive and hence $\mu(B\cap M_uA_{-u}B)>0$ for some $u\in F^*$.
\end{proof}

As a Corollary of the proof we get the following estimates:
\begin{corollary}\label{cor_finite}
Let $F$ be a finite field and assume that the affine group $\AF$ acts by measure preserving transformations on a probability space $(\Omega,{\mathcal B},\mu)$.
Then for each $B\in{\mathcal B}$ and for each $\delta<\mu(B)$, the set $D:=\{u\in F^*:\mu(B\cap M_uA_{-u}B)>\delta\}$ satisfies
$$\frac{|D|}{|F^*|}\geq\frac{\mu(B)^2-\mu(B)\sqrt{6/|F^*|}-\delta}{\mu(B)-\delta}$$
Moreover, if the action of $\AF$ on $\Omega$ is ergodic, then for all $B,C\in{\mathcal B}$ and for each $\delta<\min\big\{\mu(B),\mu(C)\big\}$, the set $D:=\{u\in F^*:\mu(B\cap M_uA_{-u}C)>\delta\}$ satisfies
$$\frac{|D|}{|F^*|}\geq\frac{\mu(B)\mu(C)-\sqrt{6\mu(B)\mu(C)/|F^*|}-\delta}{\min\big(\mu(B),\mu(C)\big)-\delta}$$
\end{corollary}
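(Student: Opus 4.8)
The plan is to deduce both inequalities from the averaged estimates already produced inside the proof of Theorem~\ref{teo_finitergodic}, using the routine device of squeezing a normalized sum between a lower bound $\alpha$ and an upper bound of the form $\delta+t(m-\delta)$, where $t$ is the quantity we want to estimate.

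First I would record what the proof of Theorem~\ref{teo_finitergodic} already gives. Inequality (\ref{eq_finitergodic}) was derived with no ergodicity assumption, so for arbitrary $B,C\in{\mathcal B}$ we have
$$\frac1{|F^*|}\sum_{u\in F^*}\mu(B\cap M_uA_{-u}C)\ \geq\ \langle 1_B,P_MP_A1_C\rangle-\sqrt{\frac{6\mu(B)\mu(C)}{|F^*|}}\ =:\ \alpha .$$
In the case $C=B$, inequality (\ref{eq_finiteaverage}) gives $\langle 1_B,P_MP_A1_B\rangle\geq\mu(B)^2$, hence $\alpha\geq\mu(B)^2-\mu(B)\sqrt{6/|F^*|}$; when the action is ergodic, $P_MP_A1_C$ equals the constant $\mu(C)$, hence $\alpha=\mu(B)\mu(C)-\sqrt{6\mu(B)\mu(C)/|F^*|}$.

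Next I would split the sum according to membership in $D$. Writing $m=\min\{\mu(B),\mu(C)\}$ and $t=|D|/|F^*|$, note that each $M_uA_{-u}$ is measure preserving, so $\mu(B\cap M_uA_{-u}C)\leq\min\{\mu(B),\mu(M_uA_{-u}C)\}=m$; and for $u\notin D$ the summand is $\leq\delta$ by definition of $D$. Therefore $\alpha\leq t\,m+(1-t)\delta=\delta+t(m-\delta)$. Since the hypothesis $\delta<m$ makes $m-\delta>0$, I can solve to get $t\geq(\alpha-\delta)/(m-\delta)$, and substituting the two values of $\alpha$ from the previous paragraph (with $m=\mu(B)$ in the non-ergodic $C=B$ case) gives exactly the two asserted bounds; when $\alpha-\delta<0$ the asserted bound is vacuous, its right-hand side being negative.

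I expect no genuine obstacle: this is a short averaging argument. The only points deserving a word of care are that (\ref{eq_finitergodic}) must be invoked in the form valid without ergodicity, so that it is available in both halves of the statement, and that in the splitting step one should use the sharper trivial bound $\min\{\mu(B),\mu(C)\}$ on each intersection measure (rather than merely $\mu(C)$), since this is precisely what produces the denominators appearing in the statement.
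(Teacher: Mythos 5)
Your proof is correct and follows essentially the same route as the paper: invoke the averaged lower bound from (\ref{eq_finitergodic}) (together with (\ref{eq_finiteaverage}) in the non-ergodic case, or $P_MP_A1_C=\mu(C)$ in the ergodic case), then split the normalized sum over $D$ and its complement using the trivial upper bounds $\mu(B)$ resp.\ $\min\{\mu(B),\mu(C)\}$ on each summand and $\delta$ off $D$, and solve for $|D|/|F^*|$. Your explicit observations---that (\ref{eq_finitergodic}) holds without ergodicity and that the $\min$ bound is what yields the stated denominator---are exactly the points the paper leaves implicit in its ``follows similarly'' remark.
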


\begin{proof}
Let $B\in{\mathcal B}$ and let $\delta<\mu(B)$.
Let $D:=\{u\in F^*:\mu(B\cap M_uA_{-u}B)>\delta\}$.
From Equations (\ref{eq_finitergodic}) and (\ref{eq_finiteaverage}) we have
$$\frac1{|F^*|}\sum_{u\in F^*}\mu(B\cap M_uA_{-u}B)\geq\mu(B)^2-\mu(B)\sqrt{\frac6{|F^*|}}$$
On the other hand, since $\mu(B\cap M_uA_{-u}B)\leq\mu(B)$ we have
$$\frac1{|F^*|}\sum_{u\in F^*}\mu(B\cap M_uA_{-u}B)\leq\frac{|D|}{|F^*|}\mu(B)+\left(1-\frac{|D|}{|F^*|}\right)\delta=\delta+\frac{|D|}{|F^*|}\big(\mu(B)-\delta\big)$$
Putting both together we obtain the conclusion of Corollary \ref{cor_finite}.
The case when the action is ergodic follows similarly, using Equation (\ref{eq_finitergodic}) and the fact that $P1_C=\mu(C)$ is a constant function.
\end{proof}

An application of Theorem \ref{teo_finitergodic} is the following finitistic analogue of Theorem \ref{teo_double}.

\begin{theorem}\label{teo_finitefield}
For any finite field $F$ and any subsets $E_1,E_2\subset F$ with $\left|E_1\right|\left|E_2\right|>6|F|$, there exist $u,v\in F$, $v\neq0$, such that $u+v\in E_1$ and $uv\in E_2$.

More precisely, for each $s<\min(|E_1|,|E_2|)$ there is a set $D\subset F^*$ with cardinality
$$|D|\geq\frac{\left|E_1\right|\left|E_2\right||F^*|/|F|-\sqrt{6\left|E_1\right|\left|E_2\right|\left|F^*\right|}-s|F^*|}{\min(|E_1|,|E_2|)-s}$$
such that for each $u\in D$ there are $s$ choices of $v\in F$ such that $u+v\in E_1$ and $uv\in E_2$.
\end{theorem}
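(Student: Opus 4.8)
The plan is to deduce Theorem \ref{teo_finitefield} from the ergodic statement in Theorem \ref{teo_finitergodic} (more precisely, from the quantitative Corollary \ref{cor_finite}) by applying it to the natural action of $\AF$ on the finite probability space $\Omega = F$ with normalized counting measure $\mu$. First I would set up the correspondence: take $\Omega=F$, let $\AF$ act on $\Omega$ by its defining action $x\mapsto ux+v$, and equip $\Omega$ with the uniform measure $\mu(S)=|S|/|F|$; this action is measure preserving (each affine map is a bijection of $F$) and ergodic, since the only subsets of $F$ invariant under all translations $A_u$ are $\emptyset$ and $F$. Then set $C=E_1$ and, rather than $E_2$ itself, I would use $B = E_2 + \text{(something)}$ — actually the cleanest choice is to unwind the combinatorial meaning directly: for $u\in F^*$, the set $B\cap M_uA_{-u}C$, with $B=1_{E_2}$, $C=1_{E_1}$, consists of those $x\in F$ with $x\in E_2$ and $u(x-u)\cdot\dots$; I would instead run the argument with $B$ and $C$ chosen so that $\mu(B\cap M_uA_{-u}C) = \frac1{|F|}\#\{x : x\in B,\ M_uA_{-u}x \in C\}$ counts exactly the $y$'s we want. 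Concretely, writing $M_uA_{-u}(x) = u(x-u) = ux-u^2$, I want to recognize the configuration $\{u+y,\ uy\}$: setting $x = u+y$ gives $ux - u^2 = u(u+y)-u^2 = uy$, so $x\in E_1$ and $M_uA_{-u}x\in E_2$ is precisely $u+y\in E_1$ and $uy\in E_2$ with $y = x-u$. Hence with $C=E_1$, $B=E_2$ — wait, orientation — with $B = E_1$ playing the role of the "$x\in B$" set and $C=E_2$, we get $\mu(E_1\cap M_uA_{-u}E_2)=\frac1{|F|}\#\{x\in E_1 : ux-u^2\in E_2\}$, and the number of valid $y$ for this $u$ equals $|F|\cdot\mu(E_1\cap M_uA_{-u}E_2)$.

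Given this dictionary, the proof is essentially a translation. Apply Corollary \ref{cor_finite} in the ergodic case with these $B,C$ and with the threshold $\delta = s/|F|$ (so that $\mu(B\cap M_uA_{-u}C)>\delta$ is equivalent to having at least $s+1 > s$, i.e. at least $s$, integer solutions $y$); note $\delta < \min(\mu(B),\mu(C))$ is exactly $s < \min(|E_1|,|E_2|)$. Then $\mu(B)=|E_1|/|F|$, $\mu(C)=|E_2|/|F|$, and the corollary gives
$$\frac{|D|}{|F^*|} \ \geq\ \frac{\mu(B)\mu(C) - \sqrt{6\mu(B)\mu(C)/|F^*|} - \delta}{\min(\mu(B),\mu(C)) - \delta}.$$
Clearing denominators by multiplying numerator and denominator inside by $|F|$ (and using $\mu(B)\mu(C) = |E_1||E_2|/|F|^2$, $6\mu(B)\mu(C)/|F^*| = 6|E_1||E_2|/(|F|^2|F^*|)$, so that $|F|\sqrt{6\mu(B)\mu(C)/|F^*|} = \sqrt{6|E_1||E_2||F^*|}/|F| \cdot |F|/|F^*|$... ) I would carefully rearrange to land on exactly the displayed bound
$$|D| \ \geq\ \frac{|E_1||E_2||F^*|/|F| - \sqrt{6|E_1||E_2||F^*|} - s|F^*|}{\min(|E_1|,|E_2|) - s},$$
and for each $u\in D$ there are at least $s$ choices of $y$. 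Finally, the first (qualitative) sentence of the theorem follows by taking $s = 0$: then $|E_1||E_2||F^*|/|F| > \sqrt{6|E_1||E_2||F^*|}$ is equivalent to $|E_1||E_2||F^*|/|F| > 6|F^*|$, i.e. $|E_1||E_2| > 6|F|$, under which the bound on $|D|$ is positive, so some $u\in F^*$ admits a valid $y$ (and $y\neq 0$ can be arranged since $y=0$ would force $u\in E_1$ and $0\in E_2$, a measure-zero nuisance absorbed into the error term or handled by noting $s\ge 1$ in the refined statement).

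The one subtlety — and the place I would be most careful — is the bookkeeping matching the "$>\delta$" in Corollary \ref{cor_finite} to "$\geq s$ integer solutions." Since $|F|\cdot\mu(E_1\cap M_uA_{-u}E_2)$ is a nonnegative integer, $\mu(E_1\cap M_uA_{-u}E_2) > s/|F|$ is equivalent to that integer being $\geq s+1$, hence in particular $\geq s$; choosing the threshold $\delta=s/|F|$ thus gives the clean statement "there are $s$ choices of $y$," and the admissibility condition $\delta<\min(\mu(B),\mu(C))$ is exactly $s<\min(|E_1|,|E_2|)$ as stated. I also need to double-check the algebraic identity $M_uA_{-u}x = ux - u^2$ and that $y\mapsto x=u+y$ is a bijection $F\to F$ respecting the two membership conditions, so that counting $x$ is the same as counting $y$; this is immediate. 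No deeper obstacle arises: everything reduces to Theorem \ref{teo_finitergodic}/Corollary \ref{cor_finite} together with the observation that the uniform action of $\AF$ on $F$ is ergodic and measure preserving.
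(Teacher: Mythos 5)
Your proposal is correct and is essentially the paper's own proof: both apply Corollary \ref{cor_finite} to the ergodic action of $\AF$ on $\Omega=F$ with normalized counting measure, take $\delta=s/|F|$, and translate $\mu(\,\cdot\,\cap M_uA_{-u}\,\cdot\,)>\delta$ into the existence of at least $s$ values of $y$ with $u+y\in E_1$ and $uy\in E_2$. Your back-and-forth over which of $E_1,E_2$ plays the role of $B$ is harmless, since the bound in Corollary \ref{cor_finite} is symmetric in $\mu(B)$ and $\mu(C)$; the paper simply takes $B=E_2$, $C=E_1$, so that $E_2\cap M_uA_{-u}E_1$ has the same measure as $M_{1/u}E_2\cap A_{-u}E_1=\{y:uy\in E_2,\ y+u\in E_1\}$.
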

Since the action of $\AF$ on $F$ is always ergodic we get a slightly stronger result than Theorem \ref{teo_double}, in that we have two sets $E_1$ and $E_2$.
Unfortunately we were unable to apply the methods of Section \ref{section} used to derive Theorem \ref{teo_main} from Theorem \ref{teo_doublergodic} in the finitistic situation.

\begin{proof}[Proof of Theorem \ref{teo_finitefield}]
Let $\Omega=F$, let $\mu$ be the normalized counting measure on $F$ and let $\AF$ act on $F$ by affine transformations.
Note that this action is ergodic.
Let $\delta=s/|F|$ and let $D=\{u\in F^*:\mu(E_2\cap M_uA_{-u}E_1)>\delta\}$.
By Corollary \ref{cor_finite} we have that
\begin{eqnarray*}
\frac{|D|}{|F^*|}&\geq&\frac{\mu(E_1)\mu(E_2)-\sqrt{6\mu(E_1)\mu(E_2)/|F^*|}-\delta}{\min\big(\mu(E_1),\mu(E_2)\big)-\delta}\\&=&\frac{\left|E_1\right|\left|E_2\right|/|F|-\sqrt{6\left|E_1\right|\left|E_2\right|/|F^*|}-s}{\min(|E_1|,|E_2|)-s}
\end{eqnarray*}
and hence
$$|D|\geq\frac{\left|E_1\right|\left|E_2\right||F^*|/|F|-\sqrt{6\left|E_1\right|\left|E_2\right|\left|F^*\right|}-s|F^*|}{\min(|E_1|,|E_2|)-s}$$
For each $u\in D$ we have
$$\frac s{|F|}=\delta\leq\mu(E_2\cap M_uA_{-u}E_1)=\mu(M_{1/u}E_2\cap A_{-u}E_1)=\frac{\left|M_{1/u}E_2\cap A_{-u}E_1\right|}{|F|}$$
Thus we have $s$ choices for $v$ inside $M_{1/u}E_2\cap A_{-u}E_1$ and for each such $v$ we have both $uv\in E_2$ and $u+v\in E_1$.
\end{proof}

Theorem \ref{teo_finitergodic} implies also the following combinatorial result in finite dimensional vector spaces over finite fields.
\begin{corollary}
Let $d\in\N$ and let $F$ be a finite field. Then for each set $B\subset F^d$ with $|B|>\sqrt{6}|F|^{d-1/2}$ and any $\alpha=(\alpha_1,...,\alpha_d)\in (F^*)^d$ there exists $v=(v_1,...,v_d)\in F^d$ and $u\in F^*$ such that both $v+u\alpha:=(v_1+u\alpha_1,...,v_d+u\alpha_d)$ and $vu:=(v_1u,...,v_du)$ are in $B$.
\end{corollary}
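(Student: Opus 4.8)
The plan is to reduce the statement for $F^d$ to the one‑dimensional ergodic statement (Theorem \ref{teo_finitergodic}) by constructing an appropriate measure preserving action of the affine group $\AF$ on the probability space $(F^d,\mu)$, where $\mu$ is the normalized counting measure. First I would fix $\alpha=(\alpha_1,\dots,\alpha_d)\in(F^*)^d$ and define the action of $\AF$ on $F^d$ by letting $M_u$ act coordinatewise as scalar multiplication, $M_u\colon x\mapsto ux$, and letting $A_v$ act by translation along the line spanned by $\alpha$, namely $A_v\colon x\mapsto x+v\alpha$. One checks immediately that this is a genuine action of $\AF$: $S_M$ and $S_A$ act as described, and the commutation relation $M_uA_v=A_{uv}M_u$ of equation (\ref{eq_affine}) holds because $u(x+v\alpha)=ux+(uv)\alpha$. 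Each $M_u$ and each $A_v$ is a bijection of $F^d$ preserving $\mu$, so this is a measure preserving action of $\AF$ on $(F^d,\mu)$.

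Next I would apply the first part of Theorem \ref{teo_finitergodic} to the set $B\subset F^d$ with $\mu(B)=|B|/|F|^d$. The hypothesis $|B|>\sqrt6\,|F|^{d-1/2}$ translates exactly into $\mu(B)>\sqrt{6/|F|}$, which is the condition required by Theorem \ref{teo_finitergodic}. Hence there exists $u\in F^*$ with $\mu(B\cap M_uA_{-u}B)>0$, so in particular $B\cap M_uA_{-u}B$ is nonempty. Unwinding the transformations: there is a point $w\in F^d$ with $w\in B$ and $w\in M_uA_{-u}B$, i.e. $w=u(z-u\alpha)$ for some $z\in B$. Equivalently, writing $y=w/u=z-u\alpha\in F^d$, we get $y+u\alpha=z\in B$ and $yu=w\in B$. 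This is precisely the desired configuration: both $y+u\alpha$ and $yu$ lie in $B$, with $u\in F^*$.

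I do not expect any real obstacle here — this is a routine lifting of the one‑dimensional result through a well‑chosen group action. The only point requiring a moment's care is verifying that the prescribed formulas genuinely define an $\AF$‑action on $F^d$ (as opposed to merely an action of $S_A\times S_M$ or of the free product), and this is settled by the single identity $u(x+v\alpha)=ux+(uv)\alpha$, which matches (\ref{eq_affine}). An alternative, essentially equivalent, route would be to note that the map $F\to F^d$, $t\mapsto$ (something built from $\alpha$), intertwines the standard $\AF$‑action on $F$ with the one above on the line $F\alpha$, but the direct verification is cleaner. Finally, one could optionally record the quantitative version: combining with Corollary \ref{cor_finite} (applied to $B=C$, any $\delta<\mu(B)$) one obtains a positive‑density set of admissible $u\in F^*$, each giving at least $\delta|F|^d$ choices of $y$, though the corollary as stated in the excerpt does not ask for this refinement.
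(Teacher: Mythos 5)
Your proposal is correct and follows essentially the same route as the paper: define the $\AF$-action on $(F^d,\mu)$ by $M_v\colon y\mapsto vy$ and $A_u\colon y\mapsto y+u\alpha$, apply Theorem \ref{teo_finitergodic} with $\mu(B)=|B|/|F|^d>\sqrt{6/|F|}$, and unwind a point of $B\cap M_uA_{-u}B$ (equivalently of $M_{1/u}B\cap A_{-u}B$, which is how the paper phrases it) to get the configuration. Your explicit check of the relation $M_uA_v=A_{uv}M_u$ via $u(x+v\alpha)=ux+(uv)\alpha$ is the one point the paper leaves implicit, and it is exactly right.
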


\begin{proof}
Let $\Omega=F^d$ and let $\mu$ be the normalized counting measure on $\Omega$.
Note that $\mu(B)>\sqrt{6/|F|}$.
Consider the action of the affine group $\AF$ on $\Omega$ defined coordinate-wise.

By Theorem \ref{teo_finitergodic} we obtain $u\in F^*$ such that $\mu(B\cap M_uA_{-u}B)=\mu(M_{1/u}B\cap A_{-u}B)>0$.
Let $v\in M_{1/u}B\cap A_{-u}B$.
We conclude that both $uv\in B$ and $u+v\alpha\in B$.
\end{proof}

Theorem \ref{teo_finitefield} was obtained by different methods by Cilleruelo \cite{Cilleruelo12} and by Hanson \cite{Hanson13}.
It should also be mentioned that, for fields of prime order, Shkredov obtained a stronger result:
\begin{theorem}[\cite{Shkredov10}]
  Let $F$ be a finite field of prime order and let $A,B,C\subset F$ be such that $\left|A\right|\left|B\right|\left|C\right|>40|F|^{5/2}$. Then there are $x,y\in F$ such that $x+y\in A$, $xy\in B$ and $x\in C$.
\end{theorem}
\section{Some concluding remarks}\label{section_remarks}
\subsection{}
Iterating Theorem \ref{teo_double} one can obtain more complex configurations.
For instance, if $E\subset K^*$ is such that $\bar d_{(F_N)}(E)>0$, then there exist $x,y\in K^*$ such that
\begin{eqnarray*}\bar d_{(F_N)}\left(\Big(\big((E-x)\cap(E/x)\big)-y\Big)\cap\Big(\big((E-x)\cap(E/x)\big)/y\Big)\right)&=&\\
\bar d_{(F_N)}\left((E-x-y)\cap(E/x-y)\cap\big((E-x)/y\big)\cap\big(E/(xy)\big)\right)&>&0
\end{eqnarray*}
In particular there exist $x,y,z\in K^*$ such that $\{z+y+x,(z+y)x,zy+x,zyx\}\subset E$.
Iterating once more we get $x,y,z,t\in K^*$ such that
$$\left\{\begin{array}{lccr}((t+z)+y)+x&((t+z)+y)\times x&((t+z)\times y)+x&((t+z)\times y)\times x\\((t\times z)+y)+x&((t\times z)+y)\times x&((t\times z)\times y)+x&((t\times z)\times y)\times x
\end{array}\right\}\subset E$$

More generally, for each $k\in\N$, applying $k$ times Theorem \ref{teo_double} we find, for a given set $E\subset K^*$ with $\bar d_{(F_N)}(E)>0$, a finite sequence $x_0,x_2,...,x_k$ such that
$$(\dots(((x_0\circ_1x_1)\circ_2x_2)\circ_3x_3)\dots)\circ_kx_k\in E$$
for each of the $2^k$ possible choices of operations $\circ_i\in\{+,\times\}$.
Note that the sequence $x_0,...,x_k$ depends on $k$, so we do not necessarily have an infinite sequence $x_0,x_1,...$ which works for every $k$ (in the same way that we have arbitrarily long arithmetic progressions on a set of positive density but not an infinite arithmetic progression).

\subsection{}

While the main motivation for this paper was Question \ref{problem}, our methods do not work in $\N$, at least without some new ideas.
The crucial difference between the field setup and that of $\N$ (or $\Z$) is that the affine group $\AK$ of a field $K$ is amenable whereas the semigroup $\{ax+b:a,b\in\Z,a\neq0\}$ is not.
In particular, it is not difficult to see that no double F\o lner sequence can exist for $\N$ (or $\Z$).
Indeed, the set $2\N$ of even numbers must have density $1$ with respect to any multiplicative F\o lner sequence because it is a (multiplicative) shift of $\N$.
On the other hand, $2\N$ must have density $1/2$ with respect to any additive F\o lner sequence, because $\N$ is the disjoint union of two (additive) shifts of $2\N$.

Even if for a ring $R$ there exists a double F\o lner sequence, we are not guaranteed to have Lemma \ref{lemma_folner}, which is used to prove Lemma \ref{lema_commute}.
Another interesting question is whether Lemma \ref{lema_commute} holds for measure preserving actions of the semigroup of affine transformations of $\N$.

\subsection{}

Note that the stipulation about arbitrarily `large' in Theorem \ref{teo_main} is essential since we want to avoid the case when the configuration $\{x+y,xy\}$ degenerates to a singleton.
To better explain this point, let $x\in K$, $x\neq1$ and let $y=\frac x{x-1}$.
Then $xy=x+y$ and hence the configuration $\{x+y,xy\}$ is rather trivial.
We just showed that for any finite coloring of $K$ there are infinitely many (trivial) monochromatic configurations of the form $\{x+y,xy\}$.
Note that our Theorem \ref{thm_fullcolor} is much stronger than this statement, not only because we have configurations with $3$ terms $\{x,x+y,xy\}$, but also because for each of "many" $x$ (indeed a set of positive lower density with respect to any double F\o lner sequence) there is not only one but "many" $y$ (indeed a set of positive upper density with respect to any double F\o lner sequence) such that $\{x,x+y,xy\}$ is monochromatic.
\subsection{}

Our main ergodic result (Theorem \ref{teo_doublergodic}) raises the question of whether, under the same assumptions, one has a triple intersection of positive measure $\mu(B\cap A_{-u}B\cap M_{1/u}B)>0$ for some $u\in K^*$.
This would imply that, given any set $E\subset K$ with $\bar d_{(F_N)}(E)>0$, one can find $u,y\in K^*$ such that $\{y,y+u,yu\}\subset E$.
Using the methods of Section \ref{section}, one could then show that for every finite coloring of $K$, one color contains a configuration of the form $\{u,y,y+u,yu\}$.

On the other hand, not every set $E\subset K$ with $\bar d_{(F_N)}(E)>0$ contains a configuration $\{u,y,y+u,yu\}$.
In fact, in every abelian group there exists a syndetic set (hence of positive density for any F\o lner sequence) not containing a configuration of the form $\{u,y,y+u\}$.
Indeed, let $G$ be an abelian group and let $\chi:G\to\R/\Z$ be a non-principal character (a non-zero homomorphism; it exists by Pontryagin duality). Then the set $E:=\{g\in G:\chi(g)\in[1/3,2/3)\}$ has no triple $\{u,y,y+u\}$. However it is syndetic because the intersection $[1/3,2/3)\cap\chi(G)$ is syndetic in the group $\chi(G)$. (This is true and easy to check with $\chi(G)$ replaced by any subgroup of $\R/\Z$.)

\paragraph{\textbf{Acknowledgements}}
The authors thank Donald Robertson for helpful comments regarding an earlier draft of this paper.

\bibliography{refs-joel}
\bibliographystyle{plain}

\end{document}